\newcommand{\defCategory}[2]{
  \newcommand{#1}{#2\defvariable}
  }
\newcommand{\defvariable}[2][]{
\if\relax\detokenize{#1}\relax  
   \if\relax\detokenize{#2}\relax
    \else
    \left({#2}\right)
    \fi
\else
  ^{{#1}}\left({#2}\right)
\fi
 }
\begin{document}
\sloppy
\newcommand{\dickebox}{{\vrule height5pt width5pt depth0pt}}
\newtheorem{thm}{Theorem}[section]
\newtheorem{Def}{Definition}[section]
\newtheorem{rem}[thm]{Remark}
\newtheorem{Bsp}{Example}[section]
\newtheorem{Prop}[Def]{Proposition}
\newtheorem{Theo}[Def]{Theorem}
\newtheorem{Lem}[Def]{Lemma}
\newtheorem{Koro}[Def]{Corollary}

 \defCategory{\Dd}{\mathscr{D}}
 \defCategory{\Cc}{\mathscr{C}}
 \defCategory{\Kk}{\mathscr{K}}

\newcommand{\overpr}{$\hfill\square$}
\newcommand{\pd}{{\rm proj.dim\, }}
\newcommand{\id}{{\rm inj.dim\, }}
\newcommand{\fd}{{\rm fin.dim}\,}
\newcommand{\add}{{\rm add \, }}
\newcommand{\Hom}{{\rm Hom \, }}
\newcommand{\gldim}{{\rm gl.dim}\,}
\newcommand{\End}{{\rm End \, }}
\newcommand{\Ext}{{\rm Ext}}
\newcommand{\D}{\rm D \,}
\newcommand{\Coker}{{\rm Coker}\,\,}
\newcommand{\cpx}[1]{#1^{\bullet}}
\newcommand{\Dz}[1]{{\rm D}^+(#1)}
\newcommand{\Df}[1]{{\rm D}^-(#1)}
\newcommand{\DFz}[1]{{\rm  {\mathscr{D}_{F}}^+}(#1)}
\newcommand{\DFf}[1]{{\rm  {\mathscr{D}_{F}}^-}(#1)}
 \def\Db#1{\Dd[{\rm b}]{#1}}
\newcommand{\DFb}[1]{{\rm {\mathscr{D}_{F}^b}}(#1)}
\newcommand{\DF}[1]{{\rm {\mathscr{D}_{F}}}(#1)}

\newcommand{\C}[1]{{\rm C}(#1)}
\newcommand{\CFa}[1]{{\rm C_{F}^{ac}}(#1)}
\newcommand{\Cz}[1]{{\rm C}^+(#1)}
\newcommand{\Cf}[1]{{\rm C}^-(#1)}
\newcommand{\Cb}[1]{{\rm C^b}(#1)}
\newcommand{\K}[1]{{\rm K}(#1)}
\newcommand{\KF}[1]{{\rm \mathscr{K}_{F}}(#1)}
\newcommand{\Ka}[1]{{\rm \mathscr{K}^{ac}}(#1)}
\newcommand{\KFa}[1]{{\rm \mathscr{K}_{F}^{ac}}(#1)}
\newcommand{\Kz}[1]{{\rm \mathscr{K}}^+(#1)}
\newcommand{\Kbz}[1]{{\rm \mathscr{K}}^{-,b}(#1)}
\newcommand{\Kbf}[1]{{\rm \mathscr{K}}^{+,b}(#1)}

\def\rank{{\rm rank}}
\def\KFb#1{\Kk[{\rm -,Fb}]{#1}}
\def\Kf#1{\Kk[{\rm -}]{#1}}
 \def\Kb#1{\Kk[{\rm b}]{#1}}
\newcommand{\modcat}[1]{#1\mbox{{\rm -mod}}}
\newcommand{\smod}[1]{#1\mbox{{\rm -\underline{mod}}}}
\newcommand{\opp}{^{\rm op}}
\newcommand{\otimesL}{\otimes^{\rm\bf L}}
\newcommand{\rHom}{{\rm\bf R}{\rm Hom}\,}
\newcommand{\projdim}{\pd}
\newcommand{\lra}{\longrightarrow}
\newcommand{\ra}{\rightarrow}
\newcommand{\rad}{{\rm rad \, }}
\newcommand{\Lra}{\Longrightarrow}
\newcommand{\tra}{\twoheadrightarrow}
\newcommand{\Htp}{{\rm Htp}}
\newcommand{\raf}[1]{\stackrel{#1}{\ra}}
\newcommand{\lraf}[1]{\stackrel{#1}{\lra}}
\newcommand{\proja}{\mathcal{P}_{\!\!\!\scriptscriptstyle{\mathcal{A}}}}
\newcommand{\projb}{\mathcal{P}_{\!\!\scriptscriptstyle{\mathcal{B}}}}
\newcommand{\projc}{\mathcal{P}_{\!\!\scriptscriptstyle{\mathcal{C}}}}
 \def\con{{\rm con}}
\newcommand{\sta}{\underline{\mathcal{A}}}
\newcommand{\stb}{\underline{\mathcal{B}}}
\newcommand{\phd}{{\rm \phi\mbox{-}dim\, }}
\newcommand{\Ima}{{\rm Ima}}
\newcommand{\Ker}{{\rm Ker}}
\newcommand{\pmodcat}[1]{#1\mbox{{\rm -proj}}}
\newcommand{\imodcat}[1]{#1\mbox{{\rm -inj}}}
\def\Modcat#1{#1\mbox{-}{\rm Mod}}
\def\modcat#1{#1\mbox{-}{\rm mod}}
\def\stModcat#1{#1\mbox{-}\underline{{\rm Mod}}}
\def\stmodcat#1{#1\mbox{-}\underline{{\rm mod}}}
\def\pModcat#1{#1\mbox{-}{\rm Proj}}
\def\pmodcat#1{#1\mbox{-}{\rm proj}}
\newcommand{\Ob}{{\rm Obj}}

\def\bfL{\mathbf{L}}
\def\calA{{\mathcal A}}
\def\calB{{\mathcal B}}
\def\calC{{\mathcal C}}
\def\calD{{\mathcal D}}
\def\Ga{\hbox{$\mathitB$}}
\def\La{\hbox{$\mathitA$}}
\newcommand{\extdm}{{\rm ext.dim }}

\def\GProj#1{#1\mbox{-}{\cal GP}}
\def\GInj#1{#1\mbox{-}{\cal GI}}
\def\Gproj#1{#1\mbox{-}{\it f\cal GP}}
\def\stGProj#1{#1\mbox{-}{\underline{{\cal GP}}}}
\def\stGproj#1{#1\mbox{-}\underline{{\it f\cal GP}}}
\def\FstGProj#1{#1\mbox{-}{\underline{{\cal FGP}}}}

\newcommand{\stHom}{\underline{\Hom}}
\newcommand{\perpgg}{^{\perp_{\gg 0}}}
\newcommand{\perpg}[1]{^{\perp_{>{#1}}}}
\newcommand{\wrd}{\rm w.resl.dim}

{\Large \bf
\begin{center}  Relative derived equivalences and relative
Igusa-Todorov dimensions 
 \end{center}}

\medskip

\centerline{\sc Peizheng Guo, Shengyong Pan}

\begin{center}  School of Mathematics and Statistics,\\
 Beijing Jiaotong University,  Beijing 100044,\\
People's Republic of China\\ E-mail:shypan@bjtu.edu.cn
\end{center}

\medskip
\abstract{ 
Let $A$ be an Artin algebra and $F$ a non-zero subfunctor of $\Ext_A^{1}(-,-)$. In this paper, we characterize the relative $\phi$-dimension of $A$ by the bi-functor $\Ext_F^1(-,-)$. Furthermore, we show that the finiteness of relative 
$\phi$-dimension of an Artin algebra is invariant under relative derived equivalence. More precisely, for an Artin algebra $A$, assume that $F$ has enough projectives and injectives, such that
there exists $G\in \modcat{A}$ such that $\add G=\mathcal {P}(F)$, where $\mathcal {P}(F)$ is the category of all $F$-projecitve $A$-modules. If $\cpx{T}$ is a relative tilting complex over $A$ with term length $t(\cpx{T})$ such that $B=\End(\cpx{T})$, 
then we have
$\phd_{F}(A)-t(T^{\bullet})\leq \phd(B)\leq
\phd_{F}(A)+t(T^{\bullet})+2$.}

\medskip {\small {\it 2020 AMS Classification}: 18E30,16G10;16S10,18G15.

\medskip {\it Key words:} relative derived category,
$F$-tilting complex, relative derived equivalence, relative
homological dimension.}

\section{Introduction}
As is known, Hochschild \cite{Hoch} introduced relative homological algebra in
categories of modules. Heller, Butler and Horrocks
developed it in more general categories with a relative abelian
structure. After that Auslander and Solberg \cite{ASo1,ASo2,ASo3,ASo4,ASo5}
applied relative homological algebra to the representation theory of
Artin algebra, they studied relative homology in terms of
subbifunctors of the functor $\Ext^1(-,-)$ and developed the general
theory of relative cotilting modules for Artin algebras.

Derived categories were invented by Grothendieck-Verdier \cite{Ver}
in the early sixties. Today, they have widely been used in many
branches: algebraic geometry, stable homotopy theory, representation
theory, etc. In representation theory of algebras, it is of
interest to investigate whether two algebras have equivalent
derived categories. Rickard \cite{Ri1} used the theory of tilting complexes to give a condition for when the derived categories of two rings are equivalent.
According to his
theorem, two rings $A$ and $B$ are derived equivalent if
and only if there is a tilting complex for $A$ such that $B$ is
isomorphic to the endomorphism algebra of this complex.  
Derived categories have been used effectively in relative
homological algebra. The main idea of relative homological algebra
is to replace projective modules by relative projective modules. It
is natural to study the corresponding version of the derived
category in this context. Since then, the relative derived
categories and relative tilting theory of Artin algebras have been
extensively studied. Gao and Zhang \cite{GZ} used
Gorenstein homological algebra to get Gorenstein derived categories.
Buan \cite{Bu1} also studied relative derived categories by
localizing relative quasi-isomorphisms. Using the notion of relative
derived categories, he generalized Happel's result on derived
equivalences induced by tilting module to the relative setting.
Motivated by Buan's work, Pan \cite{P1} introduced relative derived equivalences
for Artin algebras by the theory of relative tilting complexes, and some invariance of relative derived equivalences is founded. 

The aim of this paper is to discuss the
relationships between relative derived equivalences and relative
Igusa-Todorov dimensions which was defined in \cite{LM}.
To describe the main result, it is convenient to fix some notations.
Let $A$ and $B$ be Artin algebras. Assume that $F$ is a
subbifunctor of $\End^{1}_{A}(-,-)$ which is of finite type.
Denote by $\phd(A)$ the Igusa-Todorov dimensions of $A$. Let $n\geq0$
be an integer. If the complex $\cpx{X}$ has the form:
$$
\cdots\ra X^{-n}\ra X^{-n+1}\ra\cdots\ra X^{-1}\ra X^{0}\ra\cdots,
$$
with $X^{i}\neq0$ and the differential being radical map for $-n\leq
i \leq0$, then $n$ is called the term length of $\cpx{X}$, denoted
by $t(\cpx{X})$.

Our main result can be stated as follows:

\noindent{\bf Theorem} (see Theorem \ref{Main-result}) {\it  Let $L:\DFb{A}\ra \Db{B}$ be a
relative derived equivalence. Suppose $T^{\bullet}$ is the relative
tilting complex associated to $L$. Then we have
$\phd_{F}(A)-t(T^{\bullet})\leq \phd(B)\leq
\phd_{F}(A)+t(T^{\bullet})+2$. }

We give the upper and lower bounds of $\phd(B)$ in the term length of relative tilting complex and
$\phd_{F}(A)$. In this theorem,
if $F=\Ext_{A}^{1}(-,-)$, then $T^{\bullet}$ is a tilting
complex for $A$ such that $\End(\cpx{T})\simeq B$. In the
proof of this theorem, we infer that $\phd(A)$ and $\phd(B)$ satisfy a
similar formula but not the same as in this theorem, namely
 $\phd(A)-t(T^{\bullet})\leq \phd(B)\leq
\phd(A)+t(T^{\bullet})$. This is the main result in 
\cite[Theorem 4.10]{FLH}.

This paper is organized as follows. In Section 2, we consider closed
subbifunctors $F$ of $\Ext_{\mathscr{A}}^{1}(-,-)$, where
$\mathscr{A}$ is a small abelian category. Then we recall the notion
of the relative derived category of $\mathscr{A}$ which was defined
in \cite{Bu1}. We recall the relative derived equivalences for Artin algebras in Section 3. 
In Section 4, we prove the
main result.

\section{Relative derived categories for abelian categories}
Let us explain the notion of relative derived categories. The notion
of relative derived categories was introduced earlier by Generalov
\cite {Ge}.

Let $\mathscr{A}$ be an abelian category. Suppose $A,
C\in\mathscr{A}$. Denote by $\Ext^{1}_{\mathscr{A}}(C,A)$ the set of
all exact sequences $0\ra A \ra B \ra C \ra 0 $ in $\mathscr{A}$
modulo the equivalence relation which is defined in the following
usual way. Two exact sequences are equivalent if the following
commutative diagram is commutative.
$$\xymatrix{
0\ar[r]&A\ar[r]\ar@{=}[d]   &    B\ar[r]\ar[d]    &    C\ar[r]\ar@{=}[d]& 0\\
0\ar[r]&A\ar[r]         &    B'\ar[r]          & C\ar[r]& 0}$$

Since that any additive category has finite direct sums and in
particular uniquely defined diagonal and codiagonal maps and that an
abelian category has pullback and pushout pairs, it follows that
$\Ext^{1}_{\mathscr{A}}(C,A)$ becomes an abelian group under Baer
sum. Therefore, $\Ext^{1}_{\mathscr{A}}(-,-)$ defines an additive
bifunctor $\mathscr{A}^{op} \times \mathscr{A} \lra \textbf{Ab}$,
where $\textbf{Ab}$ is the category of abelian groups.

Consider additive non-zero subbifunctors $F$ of $\Ext^{1}(-,-)$. To
each subfunctor corresponds a class of short exact sequences which
are called $F$-exact sequences. The class of $F$-exact sequences is
closed under the operations of pushout, pullback, Baer sums and
direct sums (see \cite{ASo1} or \cite{DRSS}). Given a subbifunctor
$F$ of $\Ext^{1}(-,-)$, we say that an exact sequence
$$\eta:0\ra X \ra Y \ra Z \ra 0 $$ in $\mathscr{A}$ is an $F$-exact
sequence if $\eta$ is in $F(Z,X)$. If $ 0\ra X \stackrel{f}\ra Y
\stackrel{g}\ra Z\ra 0 $ is an $F$-exact sequence, then $f$ is
called an $F$-monomorphism and $g$ is called an $F$-epimorphism. An
object $P$ is said to be $F$-projective if for each $F$-exact
sequence $0\ra X \ra Y \ra Z \ra 0 $, the sequence
$$0\ra \mathscr{A}(P,X) \ra \mathscr{A}(P,Y) \ra \mathscr{A}(P,Z)\ra
0$$ is exact. An object $I$ is called $F$-injective if for each
$F$-exact sequence $0\ra X \ra Y \ra Z \ra 0 $, the sequence $$0\ra
\mathscr{A}(X,I) \ra \mathscr{A}(Y,I) \ra \mathscr{A}(Z,I)\ra 0$$ is
exact. The subcategory of $\mathscr{A}$ consisting of all
$F$-projective (resp. $F$-injective) modules is denoted by
$\mathcal{P}(F)$ (resp. $\mathcal{I}(F)$).

We have the following characterization of when subbifunctors of
$\Ext^{1}(-,-)$ have enough projectives or injectives.

\begin{Lem} \label{2.1}$\rm\cite[Theorem\,1.12]{ASo1}$ Let $F$ be a subbifunctor of $\Ext^{1}(-,-)$.

$(1)$ $F$ has enough projectives if and only if $F=F_{\mathcal
{P}(F)}$ and $\mathcal {P}(F)$ is contravariantly finite in
$A$-mod.

$(2)$ $F$ has enough injectives if and only if $F=F^{\mathcal
{I}(F)}$ and $\mathcal {I}(F)$ is covariantly finite in
$A$-mod.

$(3)$ If there is a finite number of indecomposable relative
projectives (injectives) up to isomorphism, then there is also a
finite number of relative injectives (projectives), and these
numbers are the same.
\end{Lem}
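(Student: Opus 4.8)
The plan is to treat parts (1) and (2) as formal statements about approximations, reducing each to the stability of the class of $F$-exact sequences under pushout and pullback, and then to derive the counting statement (3) from (1), (2) and the standard $k$-duality. For an additive subcategory $\mathcal{X}$ of $\modcat{A}$ write $F_{\mathcal{X}}$ (resp. $F^{\mathcal{X}}$) for the subbifunctor of $\Ext^1(-,-)$ consisting of those short exact sequences $0\ra X\ra Y\ra Z\ra 0$ for which $\Hom(C,Y)\ra\Hom(C,Z)\ra 0$ (resp. $\Hom(Y,C)\ra\Hom(X,C)\ra 0$) is exact for all $C\in\mathcal{X}$. Directly from the definition of $F$-projectivity and $F$-injectivity, every $F$-exact sequence is split by each object of $\mathcal{P}(F)$ and by each object of $\mathcal{I}(F)$, so $F\subseteq F_{\mathcal{P}(F)}$ and $F\subseteq F^{\mathcal{I}(F)}$ always hold; the content of (1) and (2) is the reverse inclusion under the stated hypotheses. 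Two facts are used repeatedly: the regular module $A$ is $F$-projective and the cogenerator $DA$ is $F$-injective (because $\Hom(A,-)$ and $\Hom(-,DA)$ are exact on all short exact sequences), so $\add A\subseteq\mathcal{P}(F)$ and $\add DA\subseteq\mathcal{I}(F)$.

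For (1), write $\mathcal{P}=\mathcal{P}(F)$. If $F$ has enough projectives, then for each $Z$ choose an $F$-exact sequence $0\ra K\ra P\ra Z\ra 0$ with $P\in\mathcal{P}$; for $Q\in\mathcal{P}$ the map $\Hom(Q,P)\ra\Hom(Q,Z)$ is onto because $Q$ is $F$-projective, so $P\ra Z$ is a right $\mathcal{P}$-approximation and $\mathcal{P}$ is contravariantly finite. If moreover $\eta\colon 0\ra X\ra Y\ra Z\ra 0$ lies in $F_{\mathcal{P}}$, the map $P\ra Z$ of the chosen sequence lifts along $Y\ra Z$, and the resulting morphism of short exact sequences, being the identity on $Z$, exhibits $\eta$ as a pushout of $0\ra K\ra P\ra Z\ra 0$; hence $\eta$ is $F$-exact and $F=F_{\mathcal{P}}$. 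Conversely, assume $F=F_{\mathcal{P}}$ and $\mathcal{P}$ contravariantly finite, and pick a right $\mathcal{P}$-approximation $f\colon P\ra Z$. Since $A\in\mathcal{P}$, the projective cover of $Z$ factors through $f$, so $f$ is an epimorphism; setting $K=\Ker f$, every map $Q\ra Z$ with $Q\in\mathcal{P}$ factors through $f$, so $0\ra K\ra P\ra Z\ra 0$ lies in $F_{\mathcal{P}}=F$ and is an $F$-exact sequence with $F$-projective middle term. Thus $F$ has enough projectives. Statement (2) is then obtained by applying (1) to the opposite algebra: the duality $D\colon\modcat{A}\ra\modcat{A\opp}$ carries $F$ to a subbifunctor $DF$ of $\Ext^1_{A\opp}(-,-)$ with $\mathcal{P}(DF)=D(\mathcal{I}(F))$ and $(DF)_{D\mathcal{X}}=D(F^{\mathcal{X}})$, converting \emph{enough injectives} into \emph{enough projectives} and covariant into contravariant finiteness, so that (1) for $DF$ is precisely (2) for $F$.

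For (3), suppose there are only finitely many indecomposable $F$-projectives, so $\mathcal{P}=\mathcal{P}(F)=\add M$ with $M$ basic, say with $n$ indecomposable summands (and $\add A\subseteq\add M$); being of finite type, $\mathcal{P}$ is contravariantly finite, and since $\mathcal{P}(F_{\mathcal{P}})=\mathcal{P}$ one may pass to the closed subbifunctor $F_{\mathcal{P}}$ — which has the same relative projectives — and assume $F=F_{\mathcal{P}}$ has enough projectives. The idea is then to transport the problem to $\Gamma=\End_A(M)$: the functor $\Hom_A(M,-)$ sends $\add M$ to $\add\Gamma$ and identifies $F$-exact sequences over $A$ with the exact sequences of $\Gamma$-modules they produce, so indecomposable $F$-projective $A$-modules correspond bijectively to indecomposable projective $\Gamma$-modules, of which there are $n$. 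Combining this with the duality $D$ (which converts the count of indecomposable $F$-injective $A$-modules into the count of indecomposable projective $\Gamma\opp$-modules) gives that the number of indecomposable $F$-injectives is again $n$; the symmetric argument, starting from finitely many indecomposable $F$-injectives, completes the proof. I expect the genuine obstacle to lie exactly here: making precise the correspondence between $F$-exact sequences over $A$ and honest exact sequences over $\Gamma$ — equivalently, the relative analogue of the bijection $P(S)\leftrightarrow I(S)$ mediated by the simple modules — and checking its compatibility with $D$, is where the real bookkeeping of \cite[Theorem\,1.12]{ASo1} sits; the rest is the approximation-theoretic formalism of the first two paragraphs.
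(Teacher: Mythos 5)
The paper imports this statement as \cite[Theorem 1.12]{ASo1} without proof, so there is no in-paper argument to compare against; what follows assesses your attempt on its own.

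Your proof of (1) is correct, and (2) follows by the dualization you describe. Two facts you use implicitly are worth naming: any morphism of short exact sequences that is the identity on the right-hand term realizes the target as a pushout of the source, and every subbifunctor of $\Ext^1(-,-)$ is automatically stable under pushout and pullback (this is exactly what naturality of the inclusion $F\hookrightarrow\Ext^1$ in the two variables says). With those granted, both directions of (1) go through as you write them, and $A\in\mathcal{P}(F)$ is used correctly to force surjectivity of the chosen right $\mathcal{P}(F)$-approximation in the converse.

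Part (3) is where a genuine gap sits, and you flag it yourself. Passing to $\Gamma=\End_A(M)$ merely re-expresses $n$, the number of indecomposable $F$-projectives, as the number of indecomposable projective $\Gamma$-modules; it gives no information about $\mathcal{I}(F)$, and in particular does not show that $\mathcal{I}(F)$ is of finite type at all. The appeal to $D$ does not close this: $D$ identifies $\mathcal{I}(F)$ with the relative projectives for the dual subbifunctor over $A^{\rm op}$, but the endomorphism ring of a basic $F$-injective cogenerator (whose existence you have not yet established) has no a priori relation to $\Gamma^{\rm op}$, so there is no count to transport. The actual substance of \cite[Theorem 1.12(c)]{ASo1} is a structural identification of $\mathcal{I}(F)$: writing $\mathcal{P}(F)=\add(A\oplus X)$ with $X$ basic and without projective summands, one shows $\mathcal{I}(F)=\add(DA\oplus\tau X)$, where $\tau$ is the Auslander--Reiten translate; since $\tau$ is a bijection from indecomposable non-projectives to indecomposable non-injectives and $|\add A|=|\add DA|$, the two totals coincide. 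Establishing that identification rests on the Auslander--Reiten formula and the theory of relative almost split sequences, and is not recoverable from the formal approximation and duality arguments of your first two paragraphs.
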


Recall from \cite{Bu1, DRSS} that an additive subbifunctor $F$ is
said to be closed if the following equivalence statements hold.

1) The composition of $F$-epimorphisms is an $F$-epimorphism.

2) The composition of $F$-monomorphisms is an $F$-monomorphism.

3) For each object $X$ the functor $F(X,-)$ is half exact on
$F$-exact sequences.

4) For each object $X$ the functor $F(-,X)$ is half exact on
$F$-exact sequences.

5) The category $\mathscr{A}$ with respect to the $F$-exact
sequences is an exact category.

We will give some basic examples of closed subbifunctors. Let
$\mathcal {X}$ be a full subcategory of $\mathscr{A}$ and for each
pair of objects $A$ and $C$ in $\mathscr{A}$, we define
$$
F_{\mathcal {X}}(C,A)=\{0\ra A \ra B\ra C\ra 0\mid (\mathcal
{X},B)\ra(\mathcal {X},C)\ra 0 \text{\;is exact} \}.
$$
Dually we define for each pair of objects $A$ and $C$ in
$\mathscr{A}$
$$
F^{\mathcal {X}}(C,A)=\{0\ra A \ra B\ra C\ra 0\mid (B,\mathcal
{X})\ra(A,\mathcal {X})\ra 0 \text{\;is exact} \}.
$$

\begin{Lem}\label{2.2} $\rm\cite[Proposition\,1.7]{DRSS}$
The additive subbifunctors $F_{\mathcal {X}}$ and $F^{\mathcal {X}}$
of $\Ext^{1}_{\mathscr{A}}(-,-)$ are closed for any subcategory
$\mathcal {X}$ of $\mathscr{A}$.
\end{Lem}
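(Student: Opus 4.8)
The plan is to use the equivalence of the five formulations of closedness recalled just above (following \cite{Bu1,DRSS}): since $F_{\mathcal{X}}$ and $F^{\mathcal{X}}$ are already known to be additive subbifunctors of $\Ext^{1}_{\mathscr{A}}(-,-)$, it suffices to verify, for each of them, whichever of conditions 1)--5) is cheapest --- namely condition 1) (composition of $F$-epimorphisms is an $F$-epimorphism) for $F_{\mathcal{X}}$, and condition 2) (composition of $F$-monomorphisms is an $F$-monomorphism) for $F^{\mathcal{X}}$. First I would unwind the definitions. In an abelian category a morphism $g\colon Y\ra Z$ is an $F$-epimorphism exactly when $g$ is an epimorphism and the kernel sequence $0\ra\Ker g\ra Y\raf{g}Z\ra 0$ lies in $F(Z,\Ker g)$, and dually a morphism $f\colon A\ra B$ is an $F$-monomorphism exactly when $f$ is a monomorphism and the cokernel sequence $0\ra A\raf{f}B\ra\Coker f\ra 0$ lies in $F(\Coker f,A)$. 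Feeding this into the definition of $F_{\mathcal{X}}$, an epimorphism $g\colon Y\ra Z$ is an $F_{\mathcal{X}}$-epimorphism if and only if $\mathscr{A}(U,g)\colon\mathscr{A}(U,Y)\ra\mathscr{A}(U,Z)$ is surjective for every object $U$ of $\mathcal{X}$; symmetrically, a monomorphism $f\colon A\ra B$ is an $F^{\mathcal{X}}$-monomorphism if and only if $\mathscr{A}(f,U)\colon\mathscr{A}(B,U)\ra\mathscr{A}(A,U)$ is surjective for every $U$ in $\mathcal{X}$.

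The verification is then a short diagram chase. For $F_{\mathcal{X}}$: if $g\colon Y\ra Z$ and $h\colon Z\ra W$ are $F_{\mathcal{X}}$-epimorphisms, then $hg$ is an epimorphism and, for every $U$ in $\mathcal{X}$, the map $\mathscr{A}(U,hg)=\mathscr{A}(U,h)\circ\mathscr{A}(U,g)$ is a composite of two surjections, hence surjective; therefore $0\ra\Ker(hg)\ra Y\raf{hg}W\ra 0$ lies in $F_{\mathcal{X}}$, i.e. $hg$ is an $F_{\mathcal{X}}$-epimorphism. This is condition 1), so $F_{\mathcal{X}}$ is closed. For $F^{\mathcal{X}}$: if $f\colon A\ra B$ and $f'\colon B\ra C$ are $F^{\mathcal{X}}$-monomorphisms, then $f'f$ is a monomorphism and $\mathscr{A}(f'f,U)=\mathscr{A}(f,U)\circ\mathscr{A}(f',U)$ is surjective for every $U$ in $\mathcal{X}$, so $f'f$ is an $F^{\mathcal{X}}$-monomorphism. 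This is condition 2), so $F^{\mathcal{X}}$ is closed.

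There is no serious obstacle here: once the definitions are unwound the statement is formal. The one point needing a little care is the bookkeeping that identifies $F$-epimorphisms (resp. $F$-monomorphisms) with epimorphisms (resp. monomorphisms) whose kernel (resp. cokernel) sequence is $F$-exact, together with the observation that the lifting conditions defining $F_{\mathcal{X}}$ and $F^{\mathcal{X}}$ involve only covariant, resp. only contravariant, Hom functors --- which is precisely why condition 1) is immediate for $F_{\mathcal{X}}$ and condition 2) for $F^{\mathcal{X}}$, whereas checking, say, condition 2) for $F_{\mathcal{X}}$ directly would be less transparent. If one prefers not to invoke the equivalence of 1)--5), the alternative is to check the half-exactness conditions 3) and 4) directly for $F_{\mathcal{X}}$ and $F^{\mathcal{X}}$, representing the relevant classes by short exact sequences and using pullbacks/pushouts together with left-exactness of the Hom functors; this is slightly longer but equally routine.
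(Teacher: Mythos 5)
The paper does not supply its own proof of this lemma; it simply cites \cite[Proposition 1.7]{DRSS}, so there is no in-text argument to compare yours against. Your argument is correct and is the standard route. The key observation you make --- that the defining condition for $F_{\mathcal{X}}$ involves only the covariant functors $\mathscr{A}(U,-)$, so that the characterization of an $F_{\mathcal{X}}$-epimorphism $g$ as ``$g$ epi and $\mathscr{A}(U,g)$ surjective for all $U\in\mathcal{X}$'' makes condition~1) a one-line diagram chase, and dually for $F^{\mathcal{X}}$ with condition~2) --- is exactly the right one, and it is why each of these two subbifunctors has precisely one of the two composition axioms for free.

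The single load-bearing hypothesis in your argument, which you do flag implicitly but is worth naming explicitly, is the equivalence of conditions 1)--5). That equivalence is itself a nontrivial theorem (it is, in essence, the redundancy in Quillen's exact-category axioms: closure under composition of admissible epis already implies closure under composition of admissible monos, and vice versa). It is not a formality, and if one tried to avoid it by checking, say, condition~2) for $F_{\mathcal{X}}$ directly, the argument would be considerably harder. In the present paper that equivalence is built into the stated definition of ``closed'' and is available via \cite{Bu1, DRSS}, so your reduction is legitimate. With that dependency acknowledged, your proof is complete and correct, and your remark that one could alternatively verify the half-exactness conditions 3)--4) via pullback/pushout manipulations is a reasonable fallback, though longer.
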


\noindent{\bf Remark.} By Lemma \ref{2.1} and Lemma \ref{2.2}, it
follows that if a subbifunctor $F$ has enough projectives or
injectives, then $F$ is a closed subfunctor.

Let $\K{\mathscr{A}}$ be the homotopy category of complexes over
$\mathscr{A}$. We denote by $[1]$ the shift functor. A complex
$X^{\bullet}$ with differential $d_{X}^{\bullet}$ is said to be
$F$-acyclic if for each $i$ the induced complex $$0\ra \Ima( d^{i-1}) \ra
X^{i} \ra \Ima (d^{i}) \ra 0 $$ is an $F$-exact sequence. A map $h$ in
$\K{\mathscr{A}}$ is called an $F$-quasi-isomorphism if the mapping
cone $M(h)$ is an $F$-acyclic complex. If $F$ has enough
projectives, then $F$-exact sequences are exact sequences and
$F$-acyclic complexes are acyclic complexes. But the converse is not
true.

If the class of $F$-acyclic complexes are closed under the operation
of mapping cone, then we call $F$ a triangulated subbifunctor. In
this case the class of $F$-acyclic complexes is a null-system in
$\K{\mathscr{A}}$. Since the class of $F$-quasi-isomorphisms is a
multiplicative system, it follows that we localize with respect to
this system.

\begin{Lem} $\rm\cite[Theorem\,2.4]{Bu1}$
A subbifunctor is triangulated if and only if it is closed.
\end{Lem}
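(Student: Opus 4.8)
Since the statement is an equivalence, I would treat the two implications separately.

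\emph{If $F$ is closed, then $F$ is triangulated.} Closedness is exactly condition 5): $\mathscr{A}$, equipped with the class of $F$-exact sequences, is an exact category. Moreover an $F$-exact sequence is in particular short exact, so an $F$-acyclic complex is acyclic, and is nothing but an acyclic complex for this exact structure (each differential factoring as an $F$-epimorphism followed by an $F$-monomorphism, with the appropriate kernel condition). I would then invoke — or reprove — the standard fact from the homological algebra of exact categories that the acyclic complexes form a triangulated subcategory of the homotopy category; concretely the mechanism is the two-out-of-three principle: \emph{a sequence of complexes that is $F$-exact in every degree and has two of its three terms $F$-acyclic has its third term $F$-acyclic}. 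This is proved by a diagram chase with the snake and $3\times3$ lemmas, which are available because $(\mathscr{A},F)$ is exact (one first checks that the induced sequences of images $0\to\Ima d_U^{n}\to\Ima d_V^{n}\to\Ima d_W^{n}\to0$ are $F$-exact, then recognizes $0\to\Ima d_V^{n-1}\to V^n\to\Ima d_V^n\to0$ as a row of a $3\times3$-diagram of $F$-exact sequences). Applying it to the degreewise split — hence degreewise $F$-exact — sequence $0\to Y^{\bullet}\to M(h)\to X^{\bullet}[1]\to0$ attached to a chain map $h\colon X^{\bullet}\to Y^{\bullet}$ of $F$-acyclic complexes, and using that $X^{\bullet}[1]$ is again $F$-acyclic, shows $M(h)$ is $F$-acyclic; thus $F$ is triangulated.

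\emph{If $F$ is triangulated, then $F$ is closed.} It suffices to verify one of the equivalent defining conditions, say that the composite of $F$-monomorphisms $f\colon X\to Y$ and $g\colon Y\to Z$ is again an $F$-monomorphism, equivalently that $\eta\colon 0\to X\xrightarrow{gf}Z\to Z/gf(X)\to0$ is $F$-exact. Let $\eta_f\colon 0\to X\xrightarrow{f}Y\to Y/f(X)\to0$ and $\eta_g\colon 0\to Y\xrightarrow{g}Z\to Z/g(Y)\to0$ be the given $F$-exact sequences, and let $\eta'\colon 0\to Y/f(X)\to Z/gf(X)\to Z/g(Y)\to0$ be the pushout of $\eta_g$ along $Y\to Y/f(X)$; since pushouts of $F$-exact sequences are $F$-exact, $\eta'$ is $F$-exact, so the four-term complex $\Sigma^{\bullet}$ obtained by splicing $\eta_f$ with $\eta'$ is $F$-acyclic, and so is the three-term complex $E_g^{\bullet}$ attached to $\eta_g$. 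The key step is to write down the explicit chain map $\psi\colon E_g^{\bullet}\to\Sigma^{\bullet}$ given in consecutive degrees by $0$, $\mathrm{id}_Y$, the canonical projection $Z\to Z/gf(X)$, and $\mathrm{id}_{Z/g(Y)}$, and to check that its mapping cone, after deleting the two contractible direct summands supported on a copy of $Y$ and on a copy of $Z/g(Y)$ (Gaussian elimination), is precisely the three-term complex $E_{\eta}^{\bullet}\colon 0\to X\xrightarrow{gf}Z\to Z/gf(X)\to0$. Since $E_g^{\bullet}$ and $\Sigma^{\bullet}$ are $F$-acyclic, the triangulated hypothesis makes $M(\psi)$ $F$-acyclic, and as $F$-acyclicity is then a homotopy invariant (a short consequence of the same hypothesis), $E_{\eta}^{\bullet}$ is $F$-acyclic; that is, $\eta$ is $F$-exact, and hence $F$ is closed.

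\emph{Main obstacle.} The first implication is essentially bookkeeping around a known property of exact categories. The real difficulty is the second implication: the naive chain maps between the three-term complexes of $\eta_f$ and $\eta_g$ have mapping cones that split off $E_f^{\bullet}\oplus E_g^{\bullet}$ and therefore extract no information, so one must replace $\eta_f$ by the spliced complex $\Sigma^{\bullet}$ built from the pushout $\eta'$. Finding this replacement and then carrying out the cancellation that identifies the cone with $E_{\eta}^{\bullet}$ is the crux of the argument.
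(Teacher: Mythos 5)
The paper offers no proof of this lemma; it is simply quoted from Buan's paper \cite[Theorem~2.4]{Bu1}, so there is no in-text proof to compare your argument against. Judged on its own, your proof is correct and hits the two genuinely different mechanisms at play. For the ``closed~$\Rightarrow$~triangulated'' direction, you correctly recognize that once $(\mathscr{A},F)$ carries an exact structure, the claim reduces to the two-out-of-three property for $F$-acyclic complexes applied to the degreewise split (hence degreewise $F$-exact) sequence $0\to Y^{\bullet}\to M(h)\to X^{\bullet}[1]\to 0$; the $3\times3$-lemma argument you gesture at is exactly the standard one, and since $\mathscr{A}$ is abelian there are no weak-idempotent-completeness worries. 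For the converse you also correctly identify the real obstacle: the obvious chain map between the two three-term complexes $E_f^{\bullet}$ and $E_g^{\bullet}$ has a split cone and proves nothing. Your fix --- push out $\eta_g$ along $Y\to Y/f(X)$ to get $\eta'$, splice with $\eta_f$ to form the four-term $F$-acyclic complex $\Sigma^{\bullet}$, map $E_g^{\bullet}$ into it by $(0,\mathrm{id}_Y,\mathrm{proj},\mathrm{id})$, and Gaussian-eliminate the two identity components from the cone --- does give exactly the complex $0\to X\xrightarrow{gf}Z\to Z/gf(X)\to 0$ (I checked the eliminated differential $\beta\delta^{-1}\gamma$ computes to $\pm gf$). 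One small remark: you justify ``$F$-acyclicity is a homotopy invariant'' by the triangulated hypothesis, but in fact you need less here. Gaussian elimination gives an isomorphism \emph{in} $\C{\mathscr{A}}$ of $M(\psi)$ with $E_{\eta}^{\bullet}\oplus(\text{contractible})$, and since $F$-exact sequences are closed under direct summands (a bifunctoriality fact true for any additive subbifunctor), $F$-acyclicity already passes to direct summands without invoking the triangulated hypothesis again. That is a simplification, not a gap; the argument as written is sound.
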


Now we assume that $F$ is a closed subbifunctor. It is clear that
$$\mathscr{N}=\{ X^{\bullet}\in \text{Obj}(\K{\mathscr{A}})\mid
X^{\bullet} \text{\;is an F-acyclic complex} \}$$ is a null system
and a thick subcategory of $\K{\mathscr{A}}$. Define a morphism set
\begin{eqnarray*}
\Sigma(\mathscr{N})=\{X^{\bullet} \stackrel{f^{\bullet}}\ra
Y^{\bullet} \mid\text{such that\;} X^{\bullet}
\stackrel{f^{\bullet}}\ra Y^{\bullet} \ra Z^{\bullet} \ra
X^{\bullet}[1]\text{\; is a distinguished triangle
in\;}\\\K{\mathscr{A}} \text{\;with\;} Z^{\bullet} \in\mathscr{N}
\}.
\end{eqnarray*} The relative derived
category of $\mathscr{A}$ is defined to be the Verdier quotient,
that is,
$\DFb{\mathscr{A}}:=\K{\mathscr{A}}/\mathscr{N}=\Sigma(\mathscr{N})^{-1}\K{\mathscr{A}}$.

The objects of $\DFb{A}$ are the same as for
$\K{\mathscr{A}}$. A map in $\DFb{A}$ from $X^{\bullet}$ to
$Y^{\bullet}$ is the equivalence class of "roofs", that is, of
fractions $\cpx{g}/\cpx{f}$ of the form $\xymatrix{ \cpx{X}&
Z^{\bullet} \ar_{f^{\bullet}}[l]\ar^{g^{\bullet}}[r]& \cpx{Y}}$,
where $Z^{\bullet}\in \K{\mathscr{A}}$, $f^{\bullet}:Z^{\bullet}\ra
X^{\bullet} $ is an $F$-quasi-isomorphism, and
$g^{\bullet}:Z^{\bullet}\ra X^{\bullet}$ is a morphism in
$\K{\mathscr{A}}$. Two such roofs $\xymatrix{ \cpx{X}&  Z^{\bullet}
\ar_{f^{\bullet}}[l]\ar^{g^{\bullet}}[r]& \cpx{Y}}$ and $\xymatrix{
\cpx{X'}&  \cpx{Z'}\ar_{\cpx{f'}}[l]\ar^{\cpx{g'}}[r]& \cpx{Y'}}$
are equivalent if there exists a commutative diagram
$$\xymatrix{
   &  Z^{\bullet} \ar_{f^{\bullet}}[dl]\ar^{g^{\bullet}}[dr]        \\
X^{\bullet}   & W^{\bullet}\ar_{h^{\bullet}}[l]\ar[u]\ar[d]&    Y^{\bullet} , \\
&  Z'^{\bullet} \ar^{f'^{\bullet}}[ul]\ar_{g'^{\bullet}}[ur] }$$
where $h^{\bullet}$ is an $F$-quasi-isomorphism. Note that the
diagram of the form $\xymatrix{ \cpx{X}&  Z^{\bullet}
\ar_{f^{\bullet}}[l]\ar^{g^{\bullet}}[r]& \cpx{Y}}$ will be called a
left roof.

\section{Relative derived equivalences for Artin algebras}

Let $A$ be an arbitrary ring with identity. The category $\Modcat{A}$ of unitary left  $A$-modules is an abelian category with enough projective objects.  We use $\modcat{A}$ to denote the full subcategory of $\Modcat{A}$ consisting of  finitely presented $A$-modules, that is, $A$-modules $X$ admitting a projective presentation $P_1\ra P_0\lra X\ra 0$ with $P_i$ finitely generated projective for $i=0, 1$.  The category $\modcat{A}$ is abelian when $A$ is left coherent. The full subcategory of $\Modcat{A}$ consisting of all projective modules is denoted by $\pModcat{A}$, and the category of finitely generated projective $A$-modules is written as $\pmodcat{A}$. Note that $\pmodcat{A}$ are precisely those projective modules in $\modcat{A}$. The stable category of $\Modcat{A}$ is denoted by $\stModcat{A}$, in which morphism space is denoted by $\stHom_A(X, Y)$ for each pair of $A$-modules $X$ and $Y$.  For a full subcategory $\mathscr{X}$ of $\Modcat{A}$, we denote by $\underline{\mathscr{X}}$ the full subcategory of $\stModcat{A}$ consisting of all modules in $\mathscr{X}$. However, the full subcategory of $\stModcat{A}$ consisting of finitely presented modules is denoted by $\stmodcat{A}$

Suppose $A$ is an Artin $R$-algebras. Let $F$
be a non-zero subfunctor of $\Ext_{A}^{1}(-,-)$. Let
$A$-mod be the category of finitely generated left
$A$-modules. Denote by $\mathcal{P}(F)$ the subcategory of
$A$-mod consisting of all $F$-projective $A$-modules.
The subcategory $_{A}\mathcal{P}$ of finitely generated
projective $A$-modules is contained in $\mathcal{P}(F)$.
The relative stable category of $\Modcat{A}$ is denoted by $\stModcat{A}_F$, in which morphism space factors through $F$-projective $A$-modules is denoted by $\stHom_F(X, Y)$ for each pair of $A$-modules $X$ and $Y$. The full subcategory of $\stModcat{A}_F$ consisting of finitely presented modules is denoted by $\stmodcat{A}_F$
Assume that $F$ has enough projectives and injectives, such that
there exists $G\in A$-mod such that $\add G=\mathcal {P}(F)$.
Let $\DFb{A}$ be the relative bounded derived category of
$A$-mod.

\medskip
Two Artin algebra $A$ and $B$ are said to be {\em relatively derived equivalent} if the following equivalent conditions are satisfied.

\smallskip
$(1)$ $\DFb{\modcat{A}}$ and $\Db{\modcat{B}}$ are equivalent as
triangulated categories.

$(2)$ $\Kb{\mathcal {P}(F)}$ and $\Kb{_{B}\mathcal {P}}$ are
equivalent as triangulated categories.

$(3)$ $\Kf{\mathcal {P}(F)}$ and $\Kf{_{B}\mathcal {P}}$ are
equivalent as triangulated categories.

$(4)$ There is a complex $T^{\bullet}\in\Kb{\mathcal {P}(F)}$ satisfying

   \qquad $(i)$ $\Hom(T^{\bullet},T^{\bullet}[i])= 0$ for i $\neq$   
0.

   \qquad $(ii)$ $\add(T^{\bullet})$, the category of direct summands of
    finite direct sums of copies of $T^{\bullet}$, generates $\Kb{\mathcal {P}(F)}$ as
a triangulated category,

\quad\quad  such that the endomorphism algebra of $\cpx{T}$ in $\Kb{\pmodcat{A}}$ is isomorphic to $B$.

\medskip
{\parindent-0pt For the } proof that the above conditions are indeed equivalent, we refer to \cite{P1}. 

\medskip
{\parindent=0pt   A} complex $\cpx{T}$ satisfying the conditions (a) and (b) above is called a {\em relative tilting complex}. A triangle equivalence functor $L: \DFb{\Modcat{A}}\ra \Db{\Modcat{B}}$ is called a {\em relative derived equivalence}. In this case, the image $L(G)$ is isomorphic in $\Db{\Modcat{B}}$ to a tilting complex, and there is a relative tilting complex $\cpx{T}$ over $A$ such that $L(\cpx{T})$ is isomorphic to $B$ in $\Db{\Modcat{B}}$.  The complex $\cpx{T}$ is called an {\em associated $F$-tilting complex } of $L$.  The following is an easy lemma for the associated $F$-tilting complexes.  For the convenience of the reader, we provide a proof.

\begin{Lem}\label{relative-til}
Let $A$ and $B$ be two Artin algebras, and let $L: \DFb{\Modcat{A}}\lra \Db{\Modcat{B}}$ be a derived equivalence. Then $L(G)$ is isomorphic in $\Db{\modcat{B}}$ to a complex $\cpx{\bar{T}}\in\Kb{\pmodcat{B}}$  of the form
$$0\lra \bar{T}^0\lra \bar{T}^1\lra\cdots\lra\bar{T}^n\lra 0$$
for some $n\geq 0$ if and only if $L^{-1}(B)$ is isomorphic  in $\Db{\Modcat{A}}$ to a complex $\cpx{T}\in\Kb{\mathcal{P}(F)}$ of the form
$$0\lra T^{-n}\lra\cdots\lra T^{-1}\lra T^0\lra 0.$$
\label{lemma-tiltCompForm}
\end{Lem}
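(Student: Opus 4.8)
The plan is to exploit the fact that a relative derived equivalence $L\colon\DFb{\Modcat A}\to\Db{\Modcat B}$ restricts (by the equivalent characterizations recalled in Section 3, in particular conditions (2) and (3)) to a triangle equivalence between $\Kb{\mathcal P(F)}$ and $\Kb{_B\mathcal P}$, and likewise between the one-sided bounded versions $\Kf{\mathcal P(F)}$ and $\Kf{_B\mathcal P}$, under which $G$ corresponds to $B$ (viewed as the stalk complex) up to isomorphism in the target. Thus the statement is really a statement purely inside these homotopy categories, and the key point is that $L$ and $L^{-1}$ exchange $G$ with $\bar T^{\bullet}$ and $B$ with $T^{\bullet}$ simultaneously; so I will show each implication by transporting a complex of the required shape through the equivalence and then truncating.

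First I would prove the ``only if'' direction. Assume $L(G)\cong\cpx{\bar T}$ with $\cpx{\bar T}$ a radical complex of finitely generated projective $B$-modules concentrated in degrees $0,\dots,n$. Apply $L^{-1}$. Since $L^{-1}(B)$ lies in $\DFb{\Modcat A}$, it is isomorphic to some bounded complex, and because $L^{-1}$ carries $\add(\cpx{\bar T}) = \add(L(G))$ onto $\add(G) = \mathcal P(F)$ (using $\add G = \mathcal P(F)$), the complex $L^{-1}(B)$ is isomorphic in $\Db{\Modcat A}$ to a bounded complex $\cpx{U}$ with all terms in $\mathcal P(F)$. Now I invoke the standard adjunction/$\Hom$-vanishing bookkeeping: since $\Hom_{\Kb{\mathcal P(F)}}(\cpx{U}, (\text{stalk }\mathcal P(F))[i])$ is controlled by $\Hom_{\Db{\Modcat B}}(B, \cpx{\bar T}[i])$, and $\cpx{\bar T}$ sits in nonnegative degrees, a truncation argument (replace $\cpx U$ by a homotopy-equivalent radical complex and chop off the superfluous positive-degree part using the projectivity of the terms and the vanishing of the relevant Hom-groups) shows $\cpx U$ can be taken concentrated in degrees $-n,\dots,0$. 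The ``if'' direction is entirely symmetric: starting from $L^{-1}(B)\cong\cpx T$ in degrees $-n,\dots,0$, apply $L$, note $L(G)$ is isomorphic to a bounded complex of finitely generated projective $B$-modules (this is exactly the content of $L(G)$ being a tilting complex, as recorded in Section 3), and run the same truncation argument with the roles of $0$ and $-n$ reversed.

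The main obstacle, and the step I would spend the most care on, is the truncation argument: showing that once we know $L^{-1}(B)$ is isomorphic to \emph{some} bounded complex over $\mathcal P(F)$, the degrees in which it lives are pinned to $[-n,0]$ by the degrees in which $\cpx{\bar T}$ lives in $[0,n]$. The clean way is to use brutal truncations together with the fact that in a radical complex of projective (resp. $F$-projective) objects, a term in degree $k$ survives precisely when the complex has nonzero cohomology, in the appropriate sense, detected by maps from the generator; equivalently, one uses that $\Kb{\mathcal P(F)}\simeq\Kb{_B\mathcal P}$ sends $G$ to $\cpx{\bar T}$ and so sends the $i$-fold self-extension structure of $G$ to that of $\cpx{\bar T}$, which forces the amplitude. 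I would also need to check the elementary but necessary point that a complex isomorphic in the derived category to a bounded complex of ($F$-)projectives is already homotopy equivalent to a \emph{radical} such complex of the same amplitude, which is the usual minimal-complex reduction. Once these bookkeeping facts are in place the two implications follow formally and symmetrically.
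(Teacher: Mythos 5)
Your overall plan agrees with the paper's: transport $\Hom$-group computations through the equivalence, read off vanishing, and truncate. But there is a genuine gap in the step you call the ``truncation argument,'' and the slogan you offer to fill it is false.

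You explicitly record only one of the two $\Hom$-vanishing computations that are actually needed, namely
$\Hom_{\Kb{\mathcal P(F)}}(\cpx U, G[i])\simeq\Hom_{\Db{B}}(B,\cpx{\bar T}[i])=H^i(\cpx{\bar T})$,
and then assert that this, together with $\cpx{\bar T}$ sitting in degrees $[0,n]$, pins $\cpx U$ into degrees $[-n,0]$. It does not. For a radical complex $\cpx U$ of (relatively) projective objects, maps \emph{from} the generator $\Hom(G,\cpx U[i])$ detect the \emph{top} of the amplitude, while maps \emph{to} the generator $\Hom(\cpx U, G[i])$ detect the \emph{bottom} (namely, the top of the dual radical complex $\Hom(\cpx U, G)$). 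Your claim that ``a term in degree $k$ survives precisely when the complex has nonzero cohomology \ldots detected by maps from the generator'' fails at the lower end because a radical map between projectives over an Artin algebra can be injective: take $P_2\hookrightarrow P_1$ over the path algebra of $1\to 2$, a radical complex whose degree-$0$ term is nonzero yet has $H^0=0$. Dually, the vanishing you invoked, $\Hom(\cpx U, G[i])=0$ for $i<0$, does not force the positive-degree terms of $\cpx U$ to vanish, so it cannot by itself ``chop off the superfluous positive-degree part.'' To pin the top at degree $0$ one must also use
$\Hom_{\Kb{\mathcal P(F)}}(G,\cpx U[i])\simeq\Hom_{\Db{B}}(\cpx{\bar T},B[i])=H^i\bigl(\Hom_B(\cpx{\bar T},B)\bigr)$,
which vanishes for $i>0$ precisely because $\cpx{\bar T}$ is a complex of projectives concentrated in nonnegative degrees. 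The paper makes both computations explicit in each direction (one with $B$, resp.\ $P\in\mathcal P(F)$, as source, and one with $B$, resp.\ $G$, as target) and, rather than reducing to a radical complex, observes that the vanishing plus ($F$-)projectivity makes the tails split exact, so they collapse. Once you add the missing dual computation and note that it, not the one you wrote, controls the upper end of the amplitude, your argument coincides with the paper's.
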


\begin{proof}
Considering the homological group of $\cpx{\bar{T}}$, we
have
$$
H^{i}(\bar{T}^{\bullet})\simeq
\Hom_{\Kb{B}}(B,\bar{T}^{\bullet}[i])\simeq
\Hom_{\Db{B}}(B,\bar{T}^{\bullet}[i])\simeq
\Hom_{\DFb{A}}(T^{\bullet},G[i]).
$$
Then we have
$\Hom_{\DFb{A}}(T^{\bullet},G[i])\simeq
\Hom_{\Kb{A}}(T^{\bullet},G[i])$. Then
$H^{i}(\bar{T}^{\bullet})=0$ for $i>n$ or $i<0$ by transferring
shifts. Since $\bar{T}^{\bullet}\in \Kb{_{B}\mathcal {P}}$, it
follows that $\bar{T}^{\bullet}$ has the following form
$$
\cdots\to 0\to \bar{T}^{-r}\to \bar{T}^{-r+1}\to \cdots \to
\bar{T}^{-1}\stackrel{d^{-1}}\to \bar{T}^{0} \stackrel{d^{0}}\to
\bar{T}^{1}\to \bar{T}^{s}\to0 \to\cdots
$$
where $r,s \in \mathbb{N}$. If $H^{i}(\bar{T}^{\bullet})=0$ for
$i>n$, then the following sequence $$0\to \Ima(d_T^{n})\to
\bar{T}^{n+1}\to \cdots \to \bar{T}^{s}\to0 \to\cdots$$ is a split
exact sequence. Consequently, $Im(d_T^{n})$ is a projective
$B$-module. Then the complex $\bar{T}^{\bullet}$ is isomorphic
in $\Db{B}$ to a complex of the form
$$0\to\bar{T}^{-r}\to \bar{T}^{-r+1}\to \cdots
\to \bar{T}^{-1}\to \bar{T}^{0}\to \cdots \to \bar{T}^{n-1}\to
\Ker(d_T^{n})\to 0,
$$where $\Ker(d_T^{n})$ is a projective $B$-module.
On the other hand, we have
$$
H^{i}(\Hom_{B}(\bar{T}^{\bullet},B))\simeq
\Hom_{\Db{B}}(\bar{T}^{\bullet}, B[i])\simeq
\Hom_{\DFb{A}}(G, T^{\bullet}[i])\simeq \Hom_{\Kb{A}}(G,
T^{\bullet}[i])=0.
$$for $i>0$.
It follows that $$0\to \Ima(d_T^{0},B)\to
(\bar{T}^{-1},B)\to (\bar{T}^{-2},B)\to\cdots$$ is split
exact in $\Kb{B^{op}}$, where
$\Ima(d_T^{0},B)=(\Ima(d_T^{-1}),B)$. We conclude that
$\Ker(d_T^{0},B)=(\Coker(d_T^{-1}),B)$ is a projective
$B$-module. Consequently, $\Coker(d_T^{-1})$ and $\Ima(d_T^{-1})$
are projective $B$-modules. Therefore, the complex
$\bar{T}^{\bullet}$ is isomorphic a complex of the following form
$$
\cdots \to 0 \to \Coker(d_T^{-1})\to \bar{T}^{1}\to \cdots \to
\bar{T}^{n-1}\to \Ker(d_T^{n})\to 0 \quad\text{in}\quad
\Kb{_{B}\mathcal {P}}.
$$

Conversely, suppose that the tilting complex $\cpx{\bar{T}}$ associated to the quasi-inverse of $L$ of the following form $
\cdots \to 0 \to\bar{T}^{0}\to \bar{T}^{1}\to \cdots \to
\bar{T}^{n-1}\to\bar{T}^{n}\to 0 \quad\text{in}\quad
\Kb{_{B}\mathcal {P}}.
$
Considering the homological group of $\cpx{T}$, we
have
$$
H^{i}(\Hom(P,T^{\bullet}))\simeq
\Hom_{\Kb{A}}(P,T^{\bullet}[i])\simeq
\Hom_{\DFb{A}}(P,T^{\bullet}[i])\simeq
\Hom_{\Db{B}}(\cpx{\bar{T}_P},B[i]).
$$
Then we have
$\Hom_{\Db{B}}(\bar{T}_{P}^{\bullet},B[i])\simeq
\Hom_{\Kb{A}}(\bar{T}_{P}^{\bullet},B[i])$. Then
$H^{i}(\Hom(P,T^{\bullet}))=0$ for $i>0$ or $i<-n$ by transferring
shifts. Since $T^{\bullet}\in \Kb{\mathcal {P}(F)}$, it
follows that $T^{\bullet}$ has the following form
$$
\cdots\to 0\to T^{-r}\to T^{-r+1}\to \cdots \to
T^{-1}\stackrel{d^{-1}}\to T^{0} \stackrel{d^{0}}\to
T^{1}\to T^{s}\to0 \to\cdots
$$
where $r,s \in \mathbb{N}$. If $H^{i}(P,T^{\bullet})=0$ for
$i>0$, then the following sequence $$
0\ra \Ima(\Hom(P,d_{T_{Y}}^{0}))\ra \Hom(P,T_{Y}^{1})\ra \cdots.
$$
is a split
exact sequence. 
We conclude that there exist two $F$-acyclic complexes:
$$
0\ra \Ima(d_{T_{Y}}^{0})\ra T_{Y}^{1}\ra \cdots.
$$
Consequently, $\Ima(d_T^{0})$ is a $F$-projective
$B$-module. Then the complex $\bar{T}^{\bullet}$ is isomorphic
in $\Db{B}$ to a complex of the form
$$0\to T^{-r}\to T^{-r+1}\to \cdots
\to T^{-1}\to
\Ker(d_T^{0})\to 0,
$$where $\Ker(d_T^{n})$ is a projective $B$-module.
On the other hand, we have
$$
H^{i}(\Hom_{A}(T^{\bullet},G))\simeq
\Hom_{\DFb{A}}(T^{\bullet}, B[i])\simeq
\Hom_{\Db{B}}(B, \bar{T}^{\bullet}[i])\simeq \Hom_{\Kb{A}}(G,
T^{\bullet}[i])=0.
$$for $i>n$.
It follows that $$0\to \Ima(d_T^{-n},G)\to
(\bar{T}^{-n-1},G)\to (\bar{T}^{-n-2},G)\to\cdots$$ is split
exact in $\Kb{\End(G)^{op}}$, where
$\Ima(d_T^{-n},B)=\Hom(\Ima(d_T^{-n-1}),G)$. We conclude that
$\Ker(d_T^{-n},G)=(\Coker(d_T^{-n-1}),G)$ is a projective
$\End(G)^{op}$-module. Consequently, $\Coker(d_T^{-n-1})$ and $\Ima(d_T^{-n-1})$
are $F$-projective $A$-modules. Therefore, the complex
$T^{\bullet}$ is isomorphic a complex of the following form
$$
\cdots \to 0 \to \Coker(d_T^{-1})\to T^{1}\to \cdots \to
T^{n-1}\to \Ker(d_T^{n})\to 0 \quad\text{in}\quad
\Kb{\mathcal {P}(F)}.
$$
This completes the proof.
\end{proof}

\section{Relative Igusa-Todorov dimension}

Let $K_F(A)$  the quotient abelian group generated by the set of iso-classes $\{[M]|M\in\modcat{A}\}$ modulo the relations $(a)$ $[N]-[X]-[Y]$ if $N\simeq X\oplus Y$ and $(b)$ $[P]$ if $P$ is $F$-projective. Then, $K_F(A)$ is the free abelian group generated by the iso-classes of finitely generated indecomposable non-$F$-projective $A$-modules. Note that every element of $K_F(A)$ is equal to $[M]-[N]$ for some $M,N\in\modcat{A}$.
Let $\langle M\rangle$ denote the $\mathbb{Z}$-submodule of $K_F(A)$ generated by the indecomposable non-$F$-projective direct summands of $M$.

\begin{Lem} (Fitting’s Lemma) Let $R$ be a noetherian ring. Consider a left $R$-module $M$
and $f\in\End_R(M)$. Then, for any finitely generated $R$-submodule $X$ of $M$, there is a nonnegative
integer 
$$
\eta_{f(X)}:=\min\{k\in\mathbb{Z}|f_{f^m(X)}:f^m(X)\to f^{m+1}(X),\\forall m\geq k\}
$$
Furthermore, for any $R$-submodule $Y$ of $X$, we have that $\eta_{f(Y)}\leq \eta_{f(X)}$. 
\end{Lem}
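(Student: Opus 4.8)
The plan is to prove Fitting's Lemma by imitating the classical argument for noetherian modules, but being careful to track the role of the finitely generated submodule $X$ and its submodule $Y$. First I would observe that since $R$ is noetherian, the submodule $X$ of $M$ is a noetherian module, hence the ascending chain of submodules
$$
\Ker(f|_X)\subseteq \Ker(f^2|_X)\subseteq \Ker(f^3|_X)\subseteq\cdots
$$
stabilizes, and simultaneously the descending behaviour of the images $f(X)\supseteq f^2(X)\supseteq\cdots$ must be controlled. Actually the cleanest route is: the chain of kernels $\Ker(f^m|_X)$ inside $X$ ascends and stabilizes because $X$ is noetherian; let $k$ be the least index at which it stabilizes. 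I would then show that for this $k$ the restriction $f\colon f^m(X)\to f^{m+1}(X)$ is injective for all $m\geq k$, hence an isomorphism (it is always surjective by definition of the image). The injectivity claim is the heart: if $x\in f^m(X)$ with $f(x)=0$, write $x=f^m(x')$ with $x'\in X$, so $f^{m+1}(x')=0$, i.e. $x'\in\Ker(f^{m+1}|_X)=\Ker(f^m|_X)$ by stabilization (using $m\geq k$), whence $x=f^m(x')=0$. This gives the existence of $\eta_{f(X)}$ as the minimal such $k$.

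Next I would address the monotonicity statement. Given an $R$-submodule $Y\subseteq X$, I want $\eta_{f(Y)}\leq\eta_{f(X)}$. Set $k=\eta_{f(X)}$. For $m\geq k$ the map $f\colon f^m(X)\to f^{m+1}(X)$ is an isomorphism, and since $f^m(Y)\subseteq f^m(X)$ and $f^{m+1}(Y)\subseteq f^{m+1}(X)$, the restriction of an injective map is injective, so $f\colon f^m(Y)\to f^{m+1}(Y)$ is injective; being also surjective onto its image it is an isomorphism. Therefore the defining condition for $\eta_{f(Y)}$ is already met at index $k$, which forces $\eta_{f(Y)}\leq k=\eta_{f(X)}$. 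I should note that $Y$ need not be assumed finitely generated for this half, since $Y$ is automatically noetherian as a submodule of the noetherian module $X$; but in any case the argument only uses that $f$ restricted to the relevant images is injective, inherited from $X$.

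The main obstacle — really the only subtle point — is making sure the indexing in the definition is handled correctly: the displayed set $\{k\in\mathbb{Z}\mid f_{f^m(X)}\colon f^m(X)\to f^{m+1}(X)\ \forall m\geq k\}$ is evidently shorthand for ``$f$ restricted to $f^m(X)$ is an isomorphism onto $f^{m+1}(X)$ for all $m\geq k$'', and I would state this explicitly at the start of the proof so the rest reads cleanly. One must also confirm this set is nonempty (so the minimum exists): that is exactly what the kernel-stabilization argument provides, and one should remark that once $f\colon f^k(X)\to f^{k+1}(X)$ is an isomorphism, composing isomorphisms shows $f\colon f^m(X)\to f^{m+1}(X)$ is an isomorphism for every $m\geq k$, so the condition is ``upward closed'' in $k$ and the minimum is well defined. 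With these observations in place the proof is short and essentially formal; no genuinely hard step arises beyond the standard noetherian stabilization.
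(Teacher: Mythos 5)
The paper itself gives no proof of this lemma; it is quoted from Lanzilotta and Mendoza [LM]. Your argument is the standard one and is correct: since $X$ is a finitely generated module over the noetherian ring $R$, it is noetherian, so the ascending chain $\Ker(f|_X)\subseteq\Ker(f^2|_X)\subseteq\cdots$ stabilizes at some index $k$; for $m\geq k$ the canonical surjection $f\colon f^m(X)\to f^{m+1}(X)$ is also injective by the pullback argument you give (lift a kernel element to $X$ and use $\Ker(f^{m+1}|_X)=\Ker(f^m|_X)$), so the defining set is a nonempty subset of $\mathbb{N}$ and the minimum exists. Monotonicity then holds because injectivity of $f$ on $f^m(X)$ restricts to injectivity on the submodule $f^m(Y)$. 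You also correctly read the (sloppily typeset) condition in the display as ``the restriction of $f$ is an isomorphism,'' equivalently a monomorphism, since surjectivity onto $f^{m+1}(X)$ is automatic.

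One small correction to your side remark: ``composing isomorphisms'' does not by itself show the condition is upward closed in $k$, since to conclude that $f\colon f^{k+1}(X)\to f^{k+2}(X)$ is an isomorphism you would already need that fact as one of the factors of the composite. The clean argument is the containment $f^m(X)\subseteq f^k(X)$ for $m\geq k$: injectivity of $f$ on $f^k(X)$ means $\Ker(f)\cap f^k(X)=0$, which trivially passes to the smaller submodule $f^m(X)$. Notice this is exactly the mechanism you use for $Y\subseteq X$, so the upward-closedness and the monotonicity are really one observation applied twice. That said, your main line of reasoning already establishes injectivity for \emph{all} $m\geq k$ at once via kernel stabilization, so the upward-closedness remark is not needed and the proof stands.
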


\begin{Def}\cite[Definition 2.6]{LM}
The (relative) $F$-Igusa Todorov function $\phd_F(M):\Ob(\modcat{A})\to\mathbb{N}$ is defined, by using Fitting’s Lemma, as follows
$$
\phd_F(M):=\eta_{\Omega_F}(\langle M\rangle).
$$
\end{Def}

We summarise the properties of relative Igusa-Todorov function below \cite[Propositions 2.11 and 2.12]{LM}.

\begin{Lem}
Let $A$ be an artin $R$-algebra and $M$ and $N$ are $A$-modules. Then the following statements are true.
\begin{enumerate}
\item[(a)] $\phd_F(M)=\pd_F(M)$ if $\pd_F(M)<\infty$.
\item[(b)] $\phd_F(M)=0$ if $M$ is indecomposable and $\pd_F(M)=\infty$.
\item[(c)]  $\phd_F(M)\leq \phd_F(M\oplus N)=0$.
\item[(d)] $\phd_F(M)=\phd_F(N)=0$ if $\add(M)=\add(N)$.
\item[(e)] $\phd_F(M\oplus P)\leq \phd_F(M)=0$ for any $P\in\mathcal{P}(F)$.
\item[(f)] $\phd_F(M)\leq \phd_F(\Omega_F(M))+1$.
\end{enumerate}
\end{Lem}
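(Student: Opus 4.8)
The plan is to reduce all six items to elementary linear algebra inside the free abelian group $K_F(A)$. The first step is to fix the formalism: since $F$ has enough projectives, every module admits an $F$-projective resolution, the relative syzygy $\Omega_F$ is additive up to $F$-projective summands, and $\Omega_F(P)\in\mathcal{P}(F)$ for $P\in\mathcal{P}(F)$; hence $\Omega_F$ induces a well-defined group endomorphism of $K_F(A)$, still written $\Omega_F$, with $\Omega_F[M]=[\Omega_F(M)]$ for all $M$. I would also record that $\langle M\rangle$ depends only on $\add M$, and that the ranks of the free abelian groups $\Omega_F^{m}\langle M\rangle$ are non-increasing in $m$; since a surjection between free abelian groups of equal finite rank is an isomorphism, this shows $\eta_{\Omega_F}(\langle M\rangle)$ is well defined and equals the least $m$ past which $\rank\Omega_F^{m}\langle M\rangle$ is constant.

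Granting this, (c), (d), (e) are formal. For (d), $\add M=\add N$ gives $\langle M\rangle=\langle N\rangle$, hence equal $\phd_F$. For (c), the indecomposable non-$F$-projective summands of $M$ lie among those of $M\oplus N$, so $\langle M\rangle\subseteq\langle M\oplus N\rangle$, and the monotonicity clause of Fitting's Lemma gives $\phd_F(M)\le\phd_F(M\oplus N)$. For (e), an $F$-projective module contributes no new indecomposable non-$F$-projective summand, so $\langle M\oplus P\rangle=\langle M\rangle$. For (a) and (b) I would fix a decomposition $M=M_1\oplus\cdots\oplus M_r\oplus Q$ with the $M_i$ indecomposable non-$F$-projective and $Q\in\mathcal{P}(F)$, so that $\Omega_F^{m}\langle M\rangle=\sum_{i}\mathbb{Z}[\Omega_F^{m}M_i]$. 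If $\pd_F(M)=n<\infty$, then $\pd_F(M_i)\le n$ forces $\Omega_F^{n}M_i\in\mathcal{P}(F)$, so $\Omega_F^{n}\langle M\rangle=0$, while picking $j$ with $\pd_F(M_j)=n$ makes $\Omega_F^{n-1}M_j$ non-$F$-projective (otherwise $\pd_F(M_j)\le n-1$), so $\Omega_F^{n-1}\langle M\rangle\ne0$; thus $\eta_{\Omega_F}(\langle M\rangle)=n$ (the case $n=0$ being trivial), which is (a). If $M$ is indecomposable with $\pd_F(M)=\infty$, then $M\notin\mathcal{P}(F)$ and no $\Omega_F^{m}M$ is $F$-projective, so each $\Omega_F^{m}\langle M\rangle$ is free of rank one and $\Omega_F$ restricts to an isomorphism $\Omega_F^{m}\langle M\rangle\to\Omega_F^{m+1}\langle M\rangle$ for all $m\ge0$; hence $\phd_F(M)=0$, which is (b).

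The one genuinely computational point is (f). I would first prove $\Omega_F\langle M\rangle\subseteq\langle\Omega_F(M)\rangle$: with the decomposition above, $\Omega_F(M)\cong\bigoplus_i\Omega_F(M_i)$ up to $F$-projective summands, so every indecomposable non-$F$-projective summand of each $\Omega_F(M_i)$ occurs in $\Omega_F(M)$, whence $[\Omega_F(M_i)]\in\langle\Omega_F(M)\rangle$ and the inclusion follows. Setting $k=\phd_F(\Omega_F(M))=\eta_{\Omega_F}(\langle\Omega_F(M)\rangle)$, Fitting's Lemma applied to $\Omega_F\langle M\rangle\subseteq\langle\Omega_F(M)\rangle$ gives $\eta_{\Omega_F}(\Omega_F\langle M\rangle)\le k$, i.e.\ $\Omega_F\colon\Omega_F^{m'}\langle M\rangle\to\Omega_F^{m'+1}\langle M\rangle$ is an isomorphism for all $m'\ge k+1$; hence $\phd_F(M)=\eta_{\Omega_F}(\langle M\rangle)\le k+1=\phd_F(\Omega_F(M))+1$.

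I expect the only real obstacle to be the opening formalism together with the inclusion $\Omega_F\langle M\rangle\subseteq\langle\Omega_F(M)\rangle$: one must check carefully that the module-level relative syzygy descends to an additive endomorphism of $K_F(A)$ annihilating $\mathcal{P}(F)$, and that relative syzygies of direct sums decompose accordingly up to $F$-projective summands. These are exactly the places where the standing hypothesis that $F$ has enough projectives is used; everything else is a one-line consequence of Fitting's Lemma.
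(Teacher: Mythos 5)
The paper itself does not prove this lemma: it is quoted as a summary of \cite[Propositions 2.11 and 2.12]{LM}, so there is no in-text argument to compare against. Your proof is correct and is essentially the argument one finds in [LM], which in turn is the standard Igusa--Todorov framework transported to the relative setting: you transfer everything to the free abelian group $K_F(A)$ with the induced endomorphism $\Omega_F$ (well-defined because minimal relative syzygies are additive and kill $\mathcal{P}(F)$, which needs enough $F$-projectives), obtain (c), (d), (e) from the fact that $\langle-\rangle$ depends only on $\add(-)$ together with the monotonicity clause of Fitting's Lemma, read (a) and (b) off a rank computation for $\Omega_F^m\langle M\rangle$, and derive (f) from the inclusion $\Omega_F\langle M\rangle\subseteq\langle\Omega_F(M)\rangle$ plus the index shift $\Omega_F^{m}(\Omega_F\langle M\rangle)=\Omega_F^{m+1}\langle M\rangle$. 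All of these steps are sound. One remark on the source text: items (c), (d), (e) as printed in the paper carry spurious ``$=0$'' tokens that render them false as literal statements; you have silently and correctly repaired them to the intended $\phd_F(M)\leq\phd_F(M\oplus N)$, $\phd_F(M)=\phd_F(N)$ when $\add(M)=\add(N)$, and $\phd_F(M\oplus P)=\phd_F(M)$ for $P\in\mathcal{P}(F)$, which is what your argument in fact proves.
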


\begin{Prop}\label{11.1}
Let $A$ be an artin $R$-algebra and $M$ and $N$ are $A$-modules. Then the following conditions are equivalent.
\begin{enumerate}
\item[(a)] $\Ext_F^1(M,-)\simeq \Ext_F^1(N,-)$ 
\item[(b)] $M\oplus P(M)\simeq N\oplus P(N)$ in $\modcat{A}$.
\item[(c)] $M\simeq N$ in $\smod{A}_F$
\item[(d)] $[M]=[N]$ in $K_F(A)$
\end{enumerate}
\end{Prop}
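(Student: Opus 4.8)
The plan is to verify the implications $(c)\Rightarrow(a)$, $(a)\Rightarrow(c)$, $(c)\Leftrightarrow(d)$, $(d)\Rightarrow(b)$ and $(b)\Rightarrow(c)$, of which only $(a)\Rightarrow(c)$ needs real work. Two structural facts are used throughout. First, since $\pmodcat{A}\subseteq\mathcal{P}(F)=\add G$ is functorially finite in $\modcat{A}$, every $M$ admits a surjective minimal $F$-projective cover $\epsilon_{M}\colon P_{0}(M)\to M$, giving an $F$-exact sequence $0\to\Omega_{F}M\raf{\iota_{M}}P_{0}(M)\raf{\epsilon_{M}}M\to 0$ in which $\iota_{M}$ factors through $\rad P_{0}(M)$; moreover, if $M$ has no $F$-projective (equivalently, no projective) direct summand, then $\epsilon_{M}$ is a radical morphism. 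Second, $\smod{A}_{F}$ is a Krull--Schmidt category whose indecomposable objects are exactly the indecomposable non-$F$-projective $A$-modules, because for $M$ without $F$-projective summands the ideal of morphisms factoring through $\mathcal{P}(F)$ lies in $\rad\End_{A}(M)$ (otherwise a local summand of $M$ would split off a summand of an object of $\mathcal{P}(F)$).

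The routine arrows go as follows. $(c)\Rightarrow(a)$: the functor $\Ext^{1}_{F}(-,X)$ sends $F$-projective objects to $0$ and morphisms factoring through $F$-projectives to $0$, since $\Ext^{1}_{F}(P,X)=0$ for $P\in\mathcal{P}(F)$; hence it factors through $(\smod{A}_{F})\opp$, so a stable isomorphism $M\simeq N$ yields a natural isomorphism $\Ext^{1}_{F}(M,-)\simeq\Ext^{1}_{F}(N,-)$. $(c)\Leftrightarrow(d)$: by Krull--Schmidt of $\smod{A}_{F}$ and freeness of $K_{F}(A)$ on the isoclasses of indecomposable non-$F$-projective modules, each of $(c)$ and $(d)$ is equivalent to the assertion that $M$ and $N$ have the same indecomposable non-$F$-projective summands with the same multiplicities. $(d)\Rightarrow(b)$: writing $M=M'\oplus P_{M}$, $N=N'\oplus P_{N}$ with $M',N'$ free of $F$-projective summands, $[M]=[N]$ forces $M'\simeq N'$, hence $M\oplus P_{N}\simeq N\oplus P_{M}$, which is $(b)$ (reading $P(M),P(N)$ as suitable $F$-projective modules). $(b)\Rightarrow(c)$: $F$-projectives vanish in $\smod{A}_{F}$, so there $M\simeq M\oplus P(M)\simeq N\oplus P(N)\simeq N$.

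For $(a)\Rightarrow(c)$ I would first reduce to the case that $M$ and $N$ have no $F$-projective direct summand; this is harmless, as deleting such summands changes neither $\Ext^{1}_{F}(-,-)$ in the first variable nor the classes in $\smod{A}_{F}$ and $K_{F}(A)$, and it then suffices to prove $M\simeq N$ in $\modcat{A}$. Applying $\Hom_{A}(-,X)$ to the $F$-exact sequence above and using $\Ext^{1}_{F}(P_{0}(M),X)=0$ gives, naturally in $X$, an exact sequence of coherent functors on $\modcat{A}$,
\[
0\lra\Hom_{A}(M,-)\lra\Hom_{A}(P_{0}(M),-)\lraf{\alpha_{M}}\Hom_{A}(\Omega_{F}M,-)\lra\Ext^{1}_{F}(M,-)\lra 0,
\]
where $\alpha_{M}$ corresponds to $\iota_{M}$ under Yoneda. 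I claim this is the \emph{minimal} projective resolution of $\Ext^{1}_{F}(M,-)$ in the category of coherent functors $\modcat{A}\to\mathbf{Ab}$: indeed $\alpha_{M}$ is a radical morphism because $\iota_{M}$ factors through $\rad P_{0}(M)$, and $\Hom_{A}(M,-)\hookrightarrow\Hom_{A}(P_{0}(M),-)$ is a radical morphism because $\epsilon_{M}$ is (this is exactly where the absence of $F$-projective summands is used), so all connecting maps between the representable terms are radical. Since $\modcat{A}$ is a dualizing $R$-variety, its category of coherent functors is a Krull--Schmidt abelian category in which minimal projective resolutions are unique up to isomorphism; therefore $\Ext^{1}_{F}(M,-)\simeq\Ext^{1}_{F}(N,-)$ forces $\Hom_{A}(M,-)\simeq\Hom_{A}(N,-)$, and Yoneda gives $M\simeq N$. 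Hence the original $M,N$ satisfy $(b)$, $(c)$ and $(d)$.

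The main obstacle is the claim, in $(a)\Rightarrow(c)$, that the four-term sequence above is the minimal projective resolution of $\Ext^{1}_{F}(M,-)$ in the functor category — in particular that its leftmost term is $\Hom_{A}(M,-)$ rather than $\Hom_{A}(P_{1}(M),-)$, and that $\Hom_{A}(\Omega_{F}M,-)$ is the correct term computing $\Ext^{1}_{F}$. This is precisely where the standing hypotheses ($F$ has enough projectives and injectives, $\mathcal{P}(F)=\add G$ functorially finite, $\pmodcat{A}\subseteq\mathcal{P}(F)$) enter, through the existence, surjectivity and minimality of the relative projective cover $\epsilon_{M}$. (Alternatively, one could try to deduce $(a)\Rightarrow(c)$ from a relative Auslander--Reiten formula, available under these hypotheses, via Yoneda in $\smod{A}_{F}$.) I would stress that a cheaper argument — lifting the natural isomorphism only along the epimorphisms $\Hom_{A}(\Omega_{F}M,-)\twoheadrightarrow\Ext^{1}_{F}(M,-)$ — yields merely $\Omega_{F}M\simeq\Omega_{F}N$ in $\smod{A}_{F}$, which is strictly weaker than $M\simeq N$; the term $\Hom_{A}(M,-)$ of the minimal resolution is what pins down $M$ itself.
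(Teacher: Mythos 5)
Your proof is correct, but the key implication $(a)\Rightarrow(c)$ is argued by a genuinely different method from the paper. The paper invokes the relative Auslander--Reiten formula $\Ext^{1}_{F}(M,\tau(-))\simeq \D\stHom_{F}(-,M)$ from \cite[Lemma 2.2]{ASo1}: precomposing the given isomorphism with $\tau$ and applying the duality $\D$ yields $\stHom_{F}(-,M)\simeq\stHom_{F}(-,N)$, and Yoneda in $\smod{A}_F$ finishes. You instead work in the category of coherent covariant functors on $\modcat{A}$, realizing
$0\to\Hom_A(M,-)\to\Hom_A(P_0(M),-)\to\Hom_A(\Omega_F M,-)\to\Ext^1_F(M,-)\to 0$
as the minimal projective resolution (after stripping $F$-projective summands of $M$, so that both $\epsilon_M$ and $\iota_M$ are radical), and invoking uniqueness of minimal resolutions in this Krull--Schmidt abelian category to recover $\Hom_A(M,-)\simeq\Hom_A(N,-)$ from the given isomorphism, hence $M\simeq N$. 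Your route is self-contained modulo standard facts about coherent functors over a dualizing $R$-variety, avoids the relative AR formula entirely, and in fact yields the slightly stronger conclusion $M\simeq N$ in $\modcat{A}$ (after the harmless reduction), whereas the paper's argument is shorter but leans on the cited relative AR duality; you yourself note the AR-formula alternative in a parenthetical, which is precisely what the paper does. Your treatment of the routine arrows $(c)\Rightarrow(a)$, $(c)\Leftrightarrow(d)$, $(d)\Rightarrow(b)$, $(b)\Rightarrow(c)$ matches the argument the paper defers to \cite[Proposition 3.1]{FLH}, and your concluding warning — that lifting only through the projective cover $\Hom_A(\Omega_F M,-)\twoheadrightarrow\Ext^1_F(M,-)$ gives merely $\Omega_F M\simeq\Omega_F N$ — is a good sanity check showing why the full length-two minimal resolution, and in particular its left-hand term $\Hom_A(M,-)$, is genuinely needed.
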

\begin{proof}
(a) and (c) are equivalent.
If $\Ext_F^1(M,-)\simeq \Ext_F^1(N,-)$, then $\Ext_F^1(M,\tau(-))\simeq \Ext_F^1(N,\tau(-))$.
Therefore, by the relative Auslander-Reiten formula \cite[Lemma 2.2]{ASo1} that $\Ext_F^1(M,\tau(-))\simeq \D\stHom_F(-,M)$. Consequently, $\stHom_F(-,M)\simeq \stHom_F(-,N)$. By Yoneda Lemma, we have an isomorphism $M\simeq N$ in $\smod{A}_F$.
We can prove other statements by the similar method in \cite[Proposition 3.1]{FLH}.
\end{proof}

For an artin algebra $A$, denote by $\mathcal {E}(A)$ the quotient of the free abelian group generated by the iso-classes $[\Ext^1_F(-,-)]$ in $\mathcal{C}_A$, for all $M\in\modcat{A}$ modulo the relations
$$
[\Ext^1_F(N,-)]=[\Ext^1_F(X,-)]+[\Ext^1_F(Y,-)] \quad \text{if}\quad N\simeq X\oplus Y
$$

The relative syzygy functor $\Omega_F:\modcat{A}_F\to \modcat{A}_F$ gives rise to a group homomorphism $\Omega_F:\mathcal {E}(A)\to \mathcal {E}(A)$ given by $\Omega_F[\Ext^1_F(N,-)]:=[\Ext^1_F(\Omega_F(N),-)]$. 

\begin{Theo}\label{11.2}
  Let $A$ be an artin $R$-algebra and $M$ an $A$-module. Then the following statements hold.
\begin{enumerate}
\item[(a)] The map $\epsilon:K_F(A)\to \mathcal{E}_F(A)$ given by $\epsilon([M])=[\Ext^1_F(M,-)]$ is an isomorphism of abelian group and the following diagram is commutative

$$\xymatrix@C=8mm@M=3mm{
K_F(A) \ar[r]^{\epsilon}\ar[d]^{\Omega}&\mathcal{E}_F(A)\ar[d]^{\Omega}\\
K_F(A) \ar[r]^{\epsilon} &\mathcal{E}_F(A)
}
$$
\item[(b)] $\epsilon(\Omega^n_F[M])=[\Ext^{n+1}_F(M,-)]$ for any $n\in\mathbb{N}$.
\end{enumerate}
\end{Theo}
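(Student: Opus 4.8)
The plan is to build $\epsilon$ and its inverse explicitly and then check compatibility with the syzygy operators. First I would verify that $\epsilon$ is well-defined: the assignment $[M]\mapsto[\Ext^1_F(M,-)]$ respects the defining relations of $K_F(A)$. Additivity of $\Ext^1_F$ in the first variable gives $[\Ext^1_F(N,-)]=[\Ext^1_F(X,-)]+[\Ext^1_F(Y,-)]$ whenever $N\simeq X\oplus Y$, matching relation (a); and for $P$ an $F$-projective module $\Ext^1_F(P,-)=0$, so relation (b) is sent to zero. Hence $\epsilon$ descends to a group homomorphism $K_F(A)\to\mathcal{E}_F(A)$, and it is clearly surjective since the classes $[\Ext^1_F(M,-)]$ generate $\mathcal{E}_F(A)$ by construction.

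Next I would prove injectivity of $\epsilon$, and this is where Proposition \ref{11.1} does the real work. Every element of $K_F(A)$ can be written as $[M]-[N]$ for suitable $M,N\in\modcat{A}$. If $\epsilon([M]-[N])=0$, then $[\Ext^1_F(M,-)]=[\Ext^1_F(N,-)]$ in $\mathcal{E}_F(A)$; after absorbing a common module I may assume this is an honest isomorphism of functors $\Ext^1_F(M,-)\simeq\Ext^1_F(N,-)$ (the relations in $\mathcal{E}_F(A)$ only reshuffle direct summands). By the equivalence (a)$\Leftrightarrow$(d) of Proposition \ref{11.1} this forces $[M]=[N]$ in $K_F(A)$, so $\epsilon$ is injective. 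Thus $\epsilon$ is an isomorphism of abelian groups; note that in fact $\epsilon$ is, up to this identification, essentially the identity, with $\mathcal{E}_F(A)$ being just another model for $K_F(A)$ — which is why the two $\Omega$'s will agree.

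For the commutativity of the square I would simply chase a generator. On the one side $\epsilon(\Omega_F[M])=\epsilon([\Omega_F(M)])=[\Ext^1_F(\Omega_F(M),-)]$; on the other side $\Omega_F(\epsilon([M]))=\Omega_F[\Ext^1_F(M,-)]=[\Ext^1_F(\Omega_F(M),-)]$ by the very definition of $\Omega_F$ on $\mathcal{E}_F(A)$ recalled just before the theorem. These agree, so the diagram commutes. Part (b) then follows by induction on $n$: the base case $n=0$ is the definition of $\epsilon$ together with $\Ext^1_F=\Ext^1_F$, and for the inductive step one uses the commuting square to get $\epsilon(\Omega^{n+1}_F[M])=\Omega_F\,\epsilon(\Omega^n_F[M])=\Omega_F[\Ext^{n+1}_F(M,-)]$, after which the dimension-shifting identity $\Ext^{k+1}_F(M,-)\simeq\Ext^1_F(\Omega^{k}_F(M),-)\simeq\Ext^{k}_F(\Omega_F(M),-)$ for relative Ext (valid since $F$ has enough projectives, so relative syzygies and the relative long exact sequence behave as usual) identifies this with $[\Ext^{n+2}_F(M,-)]$.

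The main obstacle I anticipate is the injectivity step: one must be careful that equality of classes in $\mathcal{E}_F(A)$ — which a priori only says the two functors become isomorphic after adding on classes of the form $[\Ext^1_F(X,-)]$ coming from the direct-sum relations — can genuinely be upgraded to a functor isomorphism $\Ext^1_F(M\oplus U,-)\simeq\Ext^1_F(N\oplus V,-)$ with $U\simeq V$, so that Proposition \ref{11.1}(a) applies cleanly. This is a bookkeeping argument about the free abelian group structure, entirely parallel to the classical Igusa-Todorov setup and to \cite[Proposition 3.1]{FLH}, but it is the one place where something beyond formal nonsense is needed.
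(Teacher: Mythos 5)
Your proposal takes essentially the same route as the paper: well-definedness and surjectivity come from additivity of $\Ext^1_F(-,-)$ and its vanishing on $\mathcal{P}(F)$ in the first variable, injectivity reduces to Proposition \ref{11.1}, commutativity of the square is a one-line computation on generators, and (b) follows by iterating the commuting square together with the relative dimension-shift $\Ext^{n+1}_F(M,-)\simeq\Ext^1_F(\Omega^n_F(M),-)$. Your flag on the injectivity step — that equality $[\Ext^1_F(M,-)]=[\Ext^1_F(N,-)]$ in the presented group $\mathcal{E}_F(A)$ must first be upgraded to a genuine functor isomorphism before Proposition \ref{11.1}(a)$\Leftrightarrow$(d) applies — is a real subtlety that the paper's proof passes over silently, so your extra care there is a small improvement rather than a divergence.
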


\begin{proof}
(a) Since the bi-functor $\Ext^1_F(-,-)$ commutes with finite direct sums and takes, in
the first variable, objects of $\mathcal{P}(F)$ to zero, we get that the map 
$\epsilon: K_F(A)\to\mathcal{E}_F(A)$ is well
defined, surjective and it is also a morphism of abelian groups. Therefore, in order to prove
that $\epsilon$ is an isomorphism of abelian groups, it is enough to see that $\Ker (\epsilon) = 0$.
Let $x\in\Ker (\epsilon)$. Then $x=[M]-[N]$ for some $M,N\in\modcat{A}$. Thus, $\epsilon([M])=\epsilon([N])$. By Proposition \ref{11.1}, it follows that $[M]=[N]$ in $K_F(A)$, then $x=0$. For any $M\in\modcat{A}$, we have 
$$
\epsilon(\Omega_F([M]))=[\Ext^1_F(\Omega_F(M),-)]=\Omega_F(\epsilon([M])).
$$
(b) Let $M\in\modcat{A}$ and $n\in\mathbb{N}$. Then we have 
$$
\epsilon(\Omega^n_F([M]))=[\Ext^1_F(\Omega^n_F(M),-)]=[\Ext^{n+1}_F(M,-)].
$$
\end{proof}

We have the following corollary by Theorem \ref{11.2}.

\begin{Koro}
     Let $A$ be an artin $R$-algebra and $M,N$ $A$-modules. Then the following conditions are equivalent.
\begin{enumerate}
\item[(a)] $[\Omega^n_FM=[\Omega^n_FN]$
\item[(b)] $\Ext^t_B(M,-)\ncong\Ext^t_B(N,-)$ for $t\geq n+1$
\item[(c)] $\Ext^{n+1}_B(M,-)\simeq\Ext^{n+1}_B(N,-)$.

\end{enumerate}
     
\end{Koro}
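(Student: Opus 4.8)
The plan is to reduce the statement entirely to Theorem~\ref{11.2} and Proposition~\ref{11.1}, viewing it as the ``$\Omega_F^n$-shifted'' version of the equivalence $(a)\Leftrightarrow(d)$ of Proposition~\ref{11.1}. (I read the three conditions with the evident corrections: (a) means $[\Omega^n_F M]=[\Omega^n_F N]$ in $K_F(A)$; in (b) and (c) the subscript should be $F$ rather than $B$, and (b) should read $\Ext^t_F(M,-)\simeq\Ext^t_F(N,-)$ for all $t\geq n+1$.) The one ingredient not already isolated in the text is the relative dimension-shift isomorphism: for every $A$-module $X$ and every $t\geq 1$ there is an isomorphism $\Ext^t_F(X,-)\simeq\Ext^1_F\big(\Omega^{t-1}_F(X),-\big)$ that is natural in the second variable. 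Since $F$ has enough projectives, one picks an $F$-exact sequence $0\to\Omega_F(X)\to P\to X\to 0$ with $P\in\mathcal{P}(F)$ and reads off the long exact $\Ext_F$-sequence in the first variable; iterating gives the claim. Equivalently, this is exactly Theorem~\ref{11.2}(b) together with the definition of $\epsilon$, once one observes that $\Omega^n_F(X)$ is well defined up to $F$-projective summands and that $\Ext^1_F(-,-)$ annihilates $\mathcal{P}(F)$ in its first variable, so the isomorphism class occurring here is unambiguous.

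With this in hand I would run the cycle $(a)\Rightarrow(b)\Rightarrow(c)\Rightarrow(a)$. For $(a)\Rightarrow(b)$: $\Omega_F$ is a group endomorphism of $K_F(A)$ (the vertical maps in the commutative diagram of Theorem~\ref{11.2}(a)), so $[\Omega^n_F M]=[\Omega^n_F N]$ forces $[\Omega^{t-1}_F M]=[\Omega^{t-1}_F N]$ for every $t\geq n+1$ upon applying $\Omega_F^{\,t-1-n}$; feeding $\Omega^{t-1}_F M$ and $\Omega^{t-1}_F N$ into the implication $(d)\Rightarrow(a)$ of Proposition~\ref{11.1} gives $\Ext^1_F(\Omega^{t-1}_F M,-)\simeq\Ext^1_F(\Omega^{t-1}_F N,-)$, which the dimension-shift isomorphism rewrites as $\Ext^t_F(M,-)\simeq\Ext^t_F(N,-)$. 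The implication $(b)\Rightarrow(c)$ is immediate by taking $t=n+1$. For $(c)\Rightarrow(a)$: the dimension-shift isomorphism with $t=n+1$ turns $\Ext^{n+1}_F(M,-)\simeq\Ext^{n+1}_F(N,-)$ into $\Ext^1_F(\Omega^n_F M,-)\simeq\Ext^1_F(\Omega^n_F N,-)$, whence $(a)\Rightarrow(d)$ of Proposition~\ref{11.1}, applied to $\Omega^n_F M$ and $\Omega^n_F N$, yields $[\Omega^n_F M]=[\Omega^n_F N]$ in $K_F(A)$. One can also dispatch the whole equivalence through the isomorphism $\epsilon\colon K_F(A)\to\mathcal{E}_F(A)$: by Theorem~\ref{11.2} one has $\epsilon(\Omega^n_F[M])=[\Ext^{n+1}_F(M,-)]$, so (a) is equivalent to the equality of these two classes in $\mathcal{E}_F(A)$, which unwinds to (c), while (b) falls out of the commutativity of the diagram.

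The only point requiring genuine care --- and the nearest thing to an obstacle --- is the bookkeeping around the relative syzygy: one must check that $\Omega_F$ really descends to a well-defined endomorphism of $K_F(A)$, so that iteration is legitimate, and that the dimension-shift isomorphism is natural in the second variable, which is precisely what licenses substituting it into Proposition~\ref{11.1}. Both facts follow from the standing assumption that $F$ has enough projectives, together with the relations built into the definition of $\mathcal{E}_F(A)$; no new idea is needed, only a careful assembly of results already at hand.
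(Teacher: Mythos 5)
The paper gives no proof of this corollary beyond the remark that it follows from Theorem~\ref{11.2}, and your argument is a correct unpacking of exactly that: combining the isomorphism $\epsilon$ and the identity $\epsilon(\Omega^n_F[M])=[\Ext^{n+1}_F(M,-)]$ from Theorem~\ref{11.2} with the equivalence $(a)\Leftrightarrow(d)$ of Proposition~\ref{11.1} and the standard relative dimension-shift $\Ext^t_F(X,-)\simeq\Ext^1_F(\Omega^{t-1}_F X,-)$. You also correctly read through the typos in the statement (the missing bracket in (a), the subscript $B$ that should be $F$, and the $\ncong$ in (b) that should be $\simeq$), so this is essentially the intended argument, fully spelled out.
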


\begin{Def}
Let $A$ be an artin algebra and $d$ be a positive integer and $M$ an $A$-module. A pair $(X,Y)$ of objects in $\add(M)$ is called a relative $d$-Division of $M$ if the following three conditions hold
\begin{enumerate}
\item[(a)] $\add(X)\cap\add(Y)=0$
\item[(b)] $\Ext^d_F(M,-)\ncong\Ext^d_F(N,-)$
\item[(c)] $\Ext^{d+1}_F(M,-)\simeq\Ext^{d+1}_F(N,-)$.
\end{enumerate}
    
\end{Def}
   
We give a characterization of $\phd_F(M)$ in terms of the bi-functor $\Ext^{1}_F(-,-)$ in the following result.
\begin{Theo}\label{ext-ph}
   Let $A$ be an artin $R$-algebra and $M$ a $A$-module. Then 
   $$
\phd_F(M)=\max(\{d\in\mathbb{Z}|\text{there is a relative d-Division of M}\}\cup {0})
   $$
\end{Theo}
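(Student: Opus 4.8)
The strategy is to transport both sides of the claimed identity into the free abelian group $K_F(A)$, on which $\phd_F$ is defined through Fitting's Lemma applied to the endomorphism $\Omega_F$ and the finitely generated subgroup $\langle M\rangle$, and then to read off a relative $d$-Division directly from the failure of $\Omega_F$ to be injective on a suitable iterate of $\langle M\rangle$.

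First I would unwind the definition $\phd_F(M)=\eta_{\Omega_F}(\langle M\rangle)$. Put $\eta:=\phd_F(M)$. By the definition of $\eta_{\Omega_F}$, the map $\Omega_F$ restricts to an isomorphism $\Omega_F^{m}(\langle M\rangle)\to\Omega_F^{m+1}(\langle M\rangle)$ for every $m\geq\eta$; moreover, when $\eta\geq 1$, surjectivity onto $\Omega_F^{\eta}(\langle M\rangle)$ being automatic, injectivity of $\Omega_F$ on $\Omega_F^{\eta-1}(\langle M\rangle)$ would make $\Omega_F$ an isomorphism at every level $\geq\eta-1$, contradicting minimality of $\eta$; hence $\Omega_F$ is \emph{not} injective on $\Omega_F^{\eta-1}(\langle M\rangle)$. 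In short, for an integer $d\geq 1$ one has $\phd_F(M)\geq d$ if and only if $\Omega_F$ fails to be injective on $\Omega_F^{d-1}(\langle M\rangle)$.

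Next I would translate the two nontrivial conditions defining a relative $d$-Division $(X,Y)$ of $M$ (read for the pair $(X,Y)$ of objects of $\add M$) via the Corollary following Theorem \ref{11.2}: condition (b), $\Ext^{d}_F(X,-)\ncong\Ext^{d}_F(Y,-)$, is equivalent to $\Omega_F^{d-1}([X]-[Y])\neq 0$ in $K_F(A)$, and condition (c), $\Ext^{d+1}_F(X,-)\simeq\Ext^{d+1}_F(Y,-)$, is equivalent to $\Omega_F^{d}([X]-[Y])=0$. Then both inequalities drop out. For ``$\geq$'': if $(X,Y)$ is a relative $d$-Division, then $z:=[X]-[Y]\in\langle M\rangle$ satisfies $0\neq\Omega_F^{d-1}(z)$ and $\Omega_F\bigl(\Omega_F^{d-1}(z)\bigr)=\Omega_F^{d}(z)=0$, so $\Omega_F$ is not injective on $\Omega_F^{d-1}(\langle M\rangle)$, hence $\phd_F(M)\geq d$; thus the maximum on the right is at most $\phd_F(M)$. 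For ``$\leq$'': if $\phd_F(M)=0$ there is nothing to prove; if $\eta:=\phd_F(M)\geq 1$, choose $0\neq w\in\Omega_F^{\eta-1}(\langle M\rangle)$ with $\Omega_F(w)=0$, and write $w=\Omega_F^{\eta-1}(v)$ with $v\in\langle M\rangle$ (so $v\neq 0$). Writing $v=\sum_i c_i[M_i]$ over the indecomposable non-$F$-projective summands $M_i$ of $M$ and separating positive from negative coefficients produces $X:=\bigoplus_{c_i>0}M_i^{\oplus c_i}$ and $Y:=\bigoplus_{c_i<0}M_i^{\oplus(-c_i)}$ in $\add M$ with $\add X\cap\add Y=0$ and $[X]-[Y]=v$; since $\Omega_F^{\eta-1}(v)=w\neq 0$ and $\Omega_F^{\eta}(v)=\Omega_F(w)=0$, the dictionary above shows that $(X,Y)$ is a relative $\eta$-Division of $M$, so the maximum on the right is at least $\eta=\phd_F(M)$.

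I expect the only genuine work to be bookkeeping: extracting from Fitting's Lemma the precise equivalence ``$\phd_F(M)\geq d\iff\Omega_F$ is not injective on $\Omega_F^{d-1}(\langle M\rangle)$'', and keeping the index shifts in the Corollary after Theorem \ref{11.2} consistent, so that $\Ext^{d}_F$ matches $\Omega_F^{d-1}$ and $\Ext^{d+1}_F$ matches $\Omega_F^{d}$. Building the pair $(X,Y)$ out of $v\in\langle M\rangle$ is routine, using that $K_F(A)$ is free on the iso-classes of indecomposable non-$F$-projective modules.
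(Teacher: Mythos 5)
Your proof is correct and follows essentially the same strategy as the paper's: both translate the problem into $K_F(A)$, use the Corollary after Theorem \ref{11.2} to convert conditions (b) and (c) of a relative $d$-Division into $\Omega_F^{d-1}([X]-[Y])\neq 0$ and $\Omega_F^{d}([X]-[Y])=0$, read the existence of such a kernel element from the definition of $\eta_{\Omega_F}(\langle M\rangle)$, and then build $(X,Y)$ by separating positive and negative coefficients of $v\in\langle M\rangle$. Your write-up is if anything more careful than the paper's, which treats the reverse inequality rather tersely.

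One caveat worth flagging: your ``In short'' summary, namely that for every $d\geq 1$ one has $\phd_F(M)\geq d$ \emph{iff} $\Omega_F$ is not injective on $\Omega_F^{d-1}(\langle M\rangle)$, is stronger than what is true in general. The direction ``not injective $\Rightarrow\phd_F(M)\geq d$'' always holds, and the direction ``$\phd_F(M)\geq d\Rightarrow$ not injective'' holds at $d=\eta$ (by the minimality argument you give), but it can fail for $1\leq d<\eta$: the sequence of ranks $\rank\Omega_F^{m}(\langle M\rangle)$ is only weakly decreasing, and it may plateau before the final stabilization, so $\Omega_F$ can be injective on $\Omega_F^{d-1}(\langle M\rangle)$ for some $d<\eta$. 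What is true (and is what the paper invokes) is $\rank\Omega_F^{j}(\langle M\rangle)>\rank\Omega_F^{\eta}(\langle M\rangle)$ for $j<\eta$, not strict decrease at every step. Since your two directions of the main argument only ever use the two valid implications (``$\Leftarrow$'' for all $d$, ``$\Rightarrow$'' only at $d=\eta$), the proof stands; just the blanket iff should be weakened.
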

\begin{proof}
Let $n=\phd_F(M)>0$ and $M=\oplus^n_{i=1}M_i$ be a direct sum decomposition of $M$, where $M_i$ is indecomposable and $M_i\ncong M_j$ for $i\neq j$. 
$$
\phd_F(M)=\min\{m\in\mathbb{Z},  \rank\Omega^j_F(\langle M\rangle)=\rank\Omega^m_F(\langle M\rangle), \forall j\geq m\},
$$
it follows that the existence of natural numbers $\alpha_1,\cdots,\alpha_t$ and a partition $\{1,2,\cdots,t\}=I\uplus J$ such that $\Sigma_{i\in I}\alpha_i[\Omega_F^nM_i]=\Sigma_{j\in J}\alpha_j[\Omega_F^nM_j]$, and $\Sigma_{i\in I}\alpha_i[\Omega_F^{n-1}M_i]\neq\Sigma_{j\in J}\alpha_j[\Omega_F^{n-1}M_j]$.

Therefore, the pair $(X,Y)$ with $X=\oplus_{i\in I}M^{\alpha_i}_i$ and $Y=\oplus_{j\in J}M^{\alpha_j}_j$  is a relative $d$-Division of $M$. Moreover, by the fact $\rank\Omega^j_F(\langle M\rangle)>\rank\Omega^n_F(\langle M\rangle)$ for $j=0,1,\cdots,n-1$, completes the proof.

\end{proof}

\section{Relative derived equivalences and relative
Igusa-Todorov dimensions}

Let $\mathcal{A}$ be an additive  category. A complex $\cpx{X}$ over ${\cal A}$ is a sequences $d_X^i$ between objects $X^i$ in ${\cal A}$: $\cdots\lra X^{i-1}\lraf{d_X^{i-1}}X^i\lraf{d_X^i}X^{i+1}\lraf{d_X^{i+1}}\cdots$ such that $d_X^id_X^{i+1}=0$ for all $i\in\mathbb{Z}$.  The category of complexes over ${\cal A}$, in which morphisms are chain maps, is denoted by $\C{\cal A}$, and the corresponding homotopy category is denoted by $\Kk{\cal A}$. When ${\cal A}$ is an abelian category, we write $\Dd{\cal A}$ for the derived category of ${\cal A}$.
We also write $\Kb{\cal A}$, $\Kf{\cal A}$ and $\Kz{\cal A}$ for the full subcategories of $\Kk{\cal A}$ consisting of complexes isomorphic to bounded complexes, complexes bounded above, and complexes bounded below, respectively.  Similarly, for $*\in\{b, -, +\}$, we have $\Dd[*]{\cal A}$. Moreover, for integers $m\leq n$ and for a collection of objects ${\cal  X}$, we write $\Dd[{[m, n]}]{\cal X}$ for the full subcategory of $\Dd{\cal A}$ consisting of complexes $\cpx{X}$ isomorphic in $\Dd{\mathcal{A}}$ to complexes  with terms in ${\cal X}$ of the form
$$0\lra X^m\lra\cdots\lra X^n\lra 0.$$
For each complex $\cpx{X}$ over $\mathcal{A}$, its $i$th cohomology is denoted by $H^i(\cpx{X})$.

Let $\cpx{X}$ be a complex. Then we have the following truncations
$$
\tau_{\leq n}: \cdots\lraf{d_X^{n-3}}X^{n-2}\lraf{d_X^{n-2}}X^{n-1}\lraf{d_X^{n-1}} X^n\lra 0\lra \cdots
$$
and 
$$
\tau_{\geq n}: \cdots\lra 0\lra X^{n}\lraf{d_X^{n}}X^{n+1}\lraf{d_X^{n+1}} X^{n+2}\lra \cdots
$$

\begin{Lem}$\rm\cite[Lemma\,3.7]{P}$\label{7.6} Let $m,t,d \in \mathbb{N}$, $X^{\bullet}, Y^{\bullet} \in
\Kb{A\text{-}mod}$. Assume that $X^{i}=0$ for $i<m$, $Y^{j}=0$
for $j>t$, and $\Ext_{F}^{l}(X^{i},Y^{j})=0$ for all $i,j\in
\mathbb{N}$ and $l\geq d$. Then
$\Hom_{\DFb{A}}(X^{\bullet},Y^{\bullet}[l])=0$ for $l\geq
d+t-m$.
\end{Lem}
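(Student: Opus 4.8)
\emph{Proof plan.} The plan is to peel off the terms of $\cpx{X}$ and $\cpx{Y}$ one at a time by brutal (stupid) truncation, reducing to the case where both are stalk complexes, i.e.\ single $A$-modules. There the assertion is just the hypothesis $\Ext_F^l(X^i,Y^j)=0$ for $l\ge d$, combined with the standard identification $\Hom_{\DFb{A}}(U,V[l])\cong\Ext_F^l(U,V)$ for $A$-modules $U,V$ and $l\ge 0$, together with $\Hom_{\DFb{A}}(U,V[l])=0$ for $l<0$. This identification rests on the standing assumption that $F$ has enough projectives: one replaces $U$ by an $F$-projective resolution, and a complex of $F$-projectives that is bounded above is homotopically $F$-projective, so that $\Hom$ out of it in $\DFb{A}$ agrees with $\Hom$ in $\K{A}$, which computes $\Ext_F$.

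First I fix $\cpx{Y}$ and induct on the number of nonzero terms of $\cpx{X}$ (the case $\cpx{X}=0$ being trivial). Since $X^i=0$ for $i<m$, the termwise-split exact sequence $0\ra\sigma^{\ge m+1}\cpx{X}\ra\cpx{X}\ra X^m[-m]\ra 0$ gives a distinguished triangle in $\K{A}$, hence in the Verdier quotient $\DFb{A}$, and applying $\Hom_{\DFb{A}}(-,\cpx{Y}[l])$ yields the exact sequence
$$\Hom_{\DFb{A}}(X^m[-m],\cpx{Y}[l])\lra \Hom_{\DFb{A}}(\cpx{X},\cpx{Y}[l])\lra \Hom_{\DFb{A}}(\sigma^{\ge m+1}\cpx{X},\cpx{Y}[l]).$$
The right-hand term vanishes for $l\ge d+t-(m+1)$ by induction (the truncated complex starts in degree $m+1$ and has strictly fewer nonzero terms), hence for $l\ge d+t-m$; so it suffices to kill the left-hand term, which equals $\Hom_{\DFb{A}}(X^m,\cpx{Y}[l+m])$. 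Thus everything reduces to the claim: for an $A$-module $U$ and a bounded complex $\cpx{Y}$ with $Y^j=0$ for $j>t$ and $\Ext_F^l(U,Y^j)=0$ for all $j$ and $l\ge d$, one has $\Hom_{\DFb{A}}(U,\cpx{Y}[s])=0$ for $s\ge d+t$.

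I prove this by a second induction, now on the number of nonzero terms of $\cpx{Y}$. If $\cpx{Y}=V[-t']$ is a stalk with $t'\le t$, then $\Hom_{\DFb{A}}(U,\cpx{Y}[s])=\Hom_{\DFb{A}}(U,V[s-t'])$, which equals $\Ext_F^{s-t'}(U,V)$ when $s-t'\ge 0$ and vanishes otherwise; since $s\ge d+t\ge d+t'$ forces $s-t'\ge d$, the hypothesis makes it vanish. For the inductive step, let $t''\le t$ be the top degree of $\cpx{Y}$, so $\sigma^{\ge t''}\cpx{Y}=Y^{t''}[-t'']$; applying $\Hom_{\DFb{A}}(U,-[s])$ to the triangle $Y^{t''}[-t'']\ra\cpx{Y}\ra\sigma^{\le t''-1}\cpx{Y}\ra Y^{t''}[-t''][1]$ shows $\Hom_{\DFb{A}}(U,\cpx{Y}[s])$ is squeezed between $\Ext_F^{s-t''}(U,Y^{t''})$, which vanishes for $s\ge d+t$ exactly as in the base case, and $\Hom_{\DFb{A}}(U,\sigma^{\le t''-1}\cpx{Y}[s])$, which vanishes for $s\ge d+t$ by the inductive hypothesis. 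This completes the reduction.

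The inductions themselves are routine: termwise-split exact sequences yield triangles in the homotopy category, brutal truncation strictly decreases the number of nonzero terms so the recursions terminate, and the bound is respected at every step because truncating $\cpx{X}$ raises its bottom degree and truncating $\cpx{Y}$ lowers its top degree, both loosening the required inequality. The one point that needs genuine care is the foundational identification $\Hom_{\DFb{A}}(U,V[l])\cong\Ext_F^l(U,V)$ in the relative derived category, which is where the hypothesis "$F$ has enough projectives" enters; this is the relative analogue of the classical computation of morphisms in $\Db{B}$, and it should be set up before the inductions are run. (Equivalently, one may package the two truncations into a single filtration of $\Hom_{\DFb{A}}(\cpx{X},\cpx{Y}[n])$ whose subquotients are subquotients of $\Ext_F^q(X^i,Y^j)$ with $q+j-i=n$, and observe that a nonzero such term forces $q<d$, $i\ge m$ and $j\le t$, whence $n=q+j-i<d+t-m$.)
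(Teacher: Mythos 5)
Your argument is correct, and since the paper gives no proof of its own for this statement---it is quoted verbatim as \cite[Lemma~3.7]{P}---there is no in-text proof to compare against. The double induction via brutal truncations, peeling $\cpx{X}$ from the bottom and $\cpx{Y}$ from the top until both are stalk complexes, is the standard argument for this kind of degree-shift vanishing bound, and your shift bookkeeping checks out: raising the bottom degree of $\cpx{X}$ or lowering the top degree of $\cpx{Y}$ only loosens the constraint $l\geq d+t-m$, and the base case is exactly the hypothesis together with the identification $\Hom_{\DFb{A}}(U,V[l])\cong\Ext_F^l(U,V)$ for modules $U,V$ and $l\geq 0$, with vanishing for $l<0$. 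You correctly single out this identification as the one substantive input; it uses that $F$ has enough projectives, so that after replacing the source module by a bounded-above $F$-projective resolution the morphisms in $\DFb{A}$ into a bounded complex can be computed in the homotopy category. That foundational fact about the relative derived category is part of the setup in \cite{Bu1} and \cite{P1} and should be stated explicitly before the inductions are run, as you note. The parenthetical filtration reformulation at the end is an equivalent and compact repackaging of the same argument.
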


\begin{Lem}\cite[Lemma 4.2]{FLH}\label{12.2}
Let $k\geq 0$ and $l>0$, and let $\cpx{X},\cpx{Y}\in \Cc[{[-k, 0]}]{A}$ with $X^i,Y^i\in\pmodcat{A}$ for $-k+1\leq i\leq0$. If
$\Hom_{\Db{A}}(\cpx{X},(-[t]))|_{\modcat{A}}\simeq\Hom_{\Db{A
}}(\cpx{Y},(-[t]))|_{\modcat{A}} \quad\text{for}\quad t\geq k+l$, then
$$
\Ext_A^{t}(X^{-k},-)\simeq\Ext_A^{t}(Y^{-k},-),
$$
for $t\geq k+l$. 
\end{Lem}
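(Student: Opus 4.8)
The plan is to reduce the statement to a statement about $\Hom$-functors restricted to $\modcat{A}$ and then isolate the contribution of the bottom term $X^{-k}$ by an induction on $k$. First I would observe that the hypothesis compares the two complexes through the shifts $t \geq k+l$; by replacing $\cpx X$ and $\cpx Y$ with their hard truncations $\tau_{\leq -1}$ and the stalk complex $X^{-k}[k]$, and using the standard triangle
$$
\tau_{\leq -1}\cpx X \lra \cpx X \lra X^0 \lra (\tau_{\leq -1}\cpx X)[1]
$$
in $\Db{A}$, I would peel off the top term $X^0$. Since $X^0\in\pmodcat A$ is projective, $\Hom_{\Db A}(X^0,-[t])|_{\modcat A}$ vanishes for $t\neq 0$, so for $t\geq k+l > 0$ (note $l>0$ forces $k+l\geq 1$) the long exact sequence obtained by applying $\Hom_{\Db A}(-, N[t])$ for $N\in\modcat A$ gives
$$
\Hom_{\Db A}(\cpx X, N[t]) \simeq \Hom_{\Db A}((\tau_{\leq -1}\cpx X), N[t])
$$
naturally in $N$. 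Peeling off $X^0,X^{-1},\dots,X^{-k+1}$ one at a time in this way, after $k$ steps I am left with
$$
\Hom_{\Db A}(\cpx X, N[t]) \simeq \Hom_{\Db A}(X^{-k}[k], N[t]) \simeq \Hom_{\Db A}(X^{-k}, N[t-k]) \simeq \Ext_A^{t-k}(X^{-k}, N)
$$
for all $t$ large enough — and the bookkeeping shows that ``large enough'' means exactly $t\geq k+l$, since at the $j$-th peeling step the term removed is projective so its only obstruction sits in degree $0$ and we need $t - j > 0$, which at worst requires $t\geq k$; the extra slack $l>0$ guarantees $t-k\geq l\geq 1>0$ so that $\Ext_A^{t-k}(X^{-k},N)$ is genuinely an Ext in positive degree and the truncation identifications are valid. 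Doing the same for $\cpx Y$, the hypothesis $\Hom_{\Db A}(\cpx X, -[t])|_{\modcat A}\simeq \Hom_{\Db A}(\cpx Y,-[t])|_{\modcat A}$ for $t\geq k+l$ translates directly into $\Ext_A^{t-k}(X^{-k},-)\simeq \Ext_A^{t-k}(Y^{-k},-)$ on $\modcat A$ for $t\geq k+l$; reindexing $t\mapsto t$ (the statement already writes the conclusion in the same range $t\geq k+l$, so no reindexing is even needed, one just keeps the same running variable) yields the claim.

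The one point requiring care is the naturality of the isomorphisms in the module variable $N$: the triangle-peeling argument produces, for each $N$, an isomorphism of abelian groups, but to conclude an isomorphism of \emph{functors} $\Ext_A^{t-k}(X^{-k},-)\simeq\Ext_A^{t-k}(Y^{-k},-)$ I need these to be compatible with morphisms $N\to N'$. This is automatic because each isomorphism is induced by a fixed morphism in the derived category (the composite of the connecting maps in the triangles, which does not depend on $N$), so applying the functor $\Hom_{\Db A}(-,(-)[t])$ gives a natural transformation; it is an isomorphism pointwise, hence a natural isomorphism. I expect this naturality bookkeeping, together with correctly tracking the degree shifts so that the bound $t\geq k+l$ comes out exactly rather than $t\geq k$ or $t\geq k+1$, to be the only genuinely delicate part — the rest is a routine dévissage using that the terms $X^i,Y^i$ for $i>-k$ are projective and therefore invisible to $\Hom_{\Db A}(-,N[t])$ in positive degrees. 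Alternatively, one could cite \cite[Lemma 4.2]{FLH} verbatim since $\cpx X,\cpx Y$ have terms in $\pmodcat A$ and the relative structure plays no role here; but I would prefer to give the short self-contained dévissage above so the reader sees why the numerical bound is what it is.
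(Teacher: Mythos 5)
Your argument is correct and follows essentially the same dévissage as the paper's own proof: apply $\Hom_{\Db{A}}(-,N[t])$ to the truncation triangles to strip off the projective terms $X^0,\dots,X^{-k+1}$, identify $\Hom_{\Db{A}}(\cpx{X},N[t])\simeq\Ext_A^{t-k}(X^{-k},N)$ for $t\geq k$, and compare with $\cpx{Y}$ using the hypothesis. The only small imprecision is the closing ``reindexing'' remark: the dévissage actually yields $\Ext_A^{s}(X^{-k},-)\simeq\Ext_A^{s}(Y^{-k},-)$ for all $s\geq l$, which is stronger than the stated conclusion in the range $s\geq k+l$, so the last step is a genuine (harmless) weakening rather than a mere relabelling of the running variable.
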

\begin{proof}
For the convenient to the reader, we give the proof here. Assume that
$\Hom_{\Db{A}}(\cpx{X},(-[t]))|_{\modcat{A}}\simeq\Hom_{\Db{A
}}(\cpx{Y},(-[t]))|_{\modcat{A}} \quad\text{for}\quad t\geq k+l$. Let $C$ be an $A$-module. 
For $t\geq k+l$, applying the functor $\Hom(-,C[t])$ to the following triangles, 
$$
\begin{aligned}
X^{0}[-1]\to \cpx{X}[-1]\to \tau_{\leq-1}(\cpx{X})[-1]\to X^{0}\\
Y^{0}[-1]\to \cpx{Y}[-1]\to \tau_{\leq -1}(\cpx{Y})[-1]\to Y^{0},
\end{aligned}
$$
we get the following exact sequences
$$
\begin{aligned}
\Hom_{\Db{A}}(X^0,C[t])\to\Hom_{\Db{A}}(\tau_{\leq -1}(\cpx{X})[-1],C[t])\to \Hom_{\Db{A}}(\cpx{X}[-1],C[t])\to\Hom_{\Db{A}}(X^0[-1],C[t])\\
\Hom_{\Db{A}}(Y^0,C[t])\to\Hom_{\Db{A}}(\tau_{\leq -1}(\cpx{Y})[-1],C[t])\to \Hom_{\Db{A}}(\cpx{Y}[-1],C[t])\to\Hom_{\Db{A}}(Y^0[-1],C[t])
\end{aligned}
$$
For $t\geq k+l$, $\Hom_{\Db{A}}(X^0,C[t])\simeq \Hom_{\Db{A}}(X^0[-1],C[t])=0$ and $\Hom_{\Db{A}}(Y^0,C[t])\simeq \Hom_{\Db{A}}(Y^0[-1],C[t])=0$
since 
$X^0$ and $Y^0$ are projective. It follows that $\Hom_{\Db{A}}(\tau_{\leq -1}(\cpx{X})[-1],C[t])\simeq \Hom_{\Db{A}}(\cpx{X}[-1],C[t])$ and $\Hom_{\Db{A}}(\tau_{\leq -1}(\cpx{Y})[-1],C[t])\simeq \Hom_{\Db{A}}(\cpx{Y}[-1],C[t])$. Hence, $\Hom_{\Db{A}}(\tau_{\leq -1}(\cpx{X})[-1],C[t])\simeq\Hom_{\Db{A}}(\tau_{\leq -1}(\cpx{Y})[-1],C[t])$.
When $1<i\leq k-1$, applying the functor $\Hom(-,C[t])$ to the following triangles
$$
\begin{aligned}
X^{-i}\to \tau_{\leq -i}(\cpx{X})[-i]\to \tau_{\leq-i-1}(\cpx{X})[-i]\to X^{-i}[1]\\
Y^{-i}\to \tau_{\leq -i}(\cpx{Y}[-i]\to \tau_{\leq -i-1}(\cpx{Y})[-i]\to Y^{-i}[1]
\end{aligned}
$$
We get
$$
\begin{aligned}
\Hom_{\Db{A}}(\tau_{\leq -i}(\cpx{X})[-i],C[t])\simeq \Hom_{\Db{A}}(\tau_{\leq-i-1}(\cpx{X})[-i],C[t])\\
\Hom_{\Db{A}}(\tau_{\leq -i}(\cpx{Y})[-i],C[t])\simeq \Hom_{\Db{A}}(\tau_{\leq-i-1}(\cpx{Y})[-i],C[t]).
\end{aligned}
$$
since that $X^{-i},Y^{-i}$ projective for $t\geq k+l$. Hence, $\Hom_{\Db{A}}\tau_{\leq -i}(\cpx{X})[-i],C[t])\simeq\Hom_{\Db{A}}(\tau_{\leq -i}(\cpx{Y})[-i],C[t])$.
Finally, we obtain $\Ext_A^{t}(X^{-k},-)\simeq\Ext_A^{t}(Y^{-k},-)$ for $t\geq k+l$. 

\end{proof}

\begin{Lem}\cite[Corollary 4.5]{FLH}\label{12.3}
Let $k$ and $t$ be integers such that $t> k> 0$, and let $\cpx{X},\cpx{Y}\in \Cc[{[-k, 0]}]{A}$ with $X^i,Y^i\in\pmodcat{A}$ for $-k+1\leq i\leq0$. If
$\Ext_A^{t-k}(X^{-k},-)\simeq\Ext_A^{t-k}(Y^{-k},-)$, then 
$$
\Hom_{\Db{A}}(\cpx{X},(-)[t])|_{\Dd[{[-k, 0]}]{A}}\simeq\Hom_{\Db{A
}}(\cpx{Y},(-)[t])|_{\Dd[{[-k, 0]}]{A}}  
$$
\end{Lem}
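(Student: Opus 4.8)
The plan is to run the argument of Lemma~\ref{12.2} in reverse, using the same two families of truncation triangles but now propagating the hypothesis from the bottom term $X^{-k}$ back up to the whole complex. Fix an object $\cpx{C}\in\Dd[{[-k,0]}]{A}$; since $\Hom_{\Db{A}}(-,\cpx{C}[t])$ is homological and both $\cpx{X},\cpx{Y}$ sit in $\Dd[{[-k,0]}]{A}$, it suffices by a standard d\'evissage on $\cpx{C}$ to treat the case $\cpx{C}=C$ a single module placed in degree $0$. So assume $\Ext_A^{t-k}(X^{-k},-)\simeq\Ext_A^{t-k}(Y^{-k},-)$ and let $C$ be an $A$-module; we must show $\Hom_{\Db{A}}(\cpx{X},C[t])\simeq\Hom_{\Db{A}}(\cpx{Y},C[t])$ for $t>k$, naturally in $C$.

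First I would record the two ``stupid'' truncation triangles for $\cpx{X}$, namely the brutal truncations giving
$$
X^{-k}[k]\lra \cpx{X}\lra \tau_{\geq -k+1}(\cpx{X})\lra X^{-k}[k+1],
$$
and the analogous triangle for $\cpx{Y}$; here $\tau_{\geq -k+1}(\cpx{X})$ has all its terms projective and lives in $\Dd[{[-k+1,0]}]{A}$. Applying $\Hom_{\Db{A}}(-,C[t])$ and using $\Hom_{\Db{A}}(X^{-k}[k],C[t])=\Ext_A^{t-k}(X^{-k},C)$ together with $\Hom_{\Db{A}}(X^{-k}[k+1],C[t])=\Ext_A^{t-k-1}(X^{-k},C)$, the hypothesis $\Ext_A^{t-k}(X^{-k},-)\simeq\Ext_A^{t-k}(Y^{-k},-)$ handles the first of these, and for the second term I would climb down one more step: write $\Ext_A^{t-k-1}(X^{-k},-)$ in terms of a syzygy (or simply iterate the truncation triangle for the truncated complex) and observe that the comparison we need is between $\Hom$ out of complexes of strictly shorter length, so the statement follows by induction on $k$. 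The base case $k=1$ is immediate: then $\cpx{X}$ is $X^{-1}\to X^0$ with $X^0$ projective, and $\Hom_{\Db{A}}(\cpx{X},C[t])\simeq\Ext_A^{t-1}(X^{-1},C)$ for $t>1$ directly from the triangle $X^{-1}[1]\to\cpx{X}\to X^0\to X^{-1}[2]$, since $\Hom_{\Db{A}}(X^0,C[s])=0$ for $s>0$.

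For the inductive step I would peel off the top term instead, exactly as in the proof of Lemma~\ref{12.2}: the triangle $X^0[-1]\to\cpx{X}[-1]\to\tau_{\leq -1}(\cpx{X})[-1]\to X^0$ reduces the computation of $\Hom_{\Db{A}}(\cpx{X},C[t])$ for $t\geq k+1$ to that of $\Hom_{\Db{A}}(\tau_{\leq -1}(\cpx{X}),C[t])$, since $X^0$ projective kills the two outer terms in degrees $>0$; now $\tau_{\leq -1}(\cpx{X})[-1]$ is, after the shift, a complex concentrated in $[-(k-1),0]$ with the same bottom term $X^{-k}$, all higher terms projective, and the relevant $\Ext$ degree drops accordingly so that the inductive hypothesis applies with $k$ replaced by $k-1$. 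Matching the two sides for $\cpx{X}$ and $\cpx{Y}$ term by term through these isomorphisms yields the claim, and naturality in $C$ is preserved at every step because all the connecting maps and the truncation triangles are natural.

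The main obstacle I expect is purely bookkeeping: keeping the degree shifts straight so that the $\Ext$-degree appearing at each stage of the induction is exactly the one supplied by the hypothesis (one must check that peeling off $X^0,\dots,X^{-k+1}$ successively lowers the length by one each time while the cohomological degree $t$ stays put, so that by the time one reaches the bottom term the required vanishing/isomorphism is precisely $\Ext_A^{t-k}(X^{-k},-)\simeq\Ext_A^{t-k}(Y^{-k},-)$), and making sure the inequality $t>k>0$ is used in the right place to guarantee $\Hom_{\Db{A}}(X^{-i},C[t])=0$ for the projective terms $X^{-i}$ at each intermediate degree. There is no genuine homological difficulty beyond that; the lemma is essentially the converse bookkeeping to Lemma~\ref{12.2}.
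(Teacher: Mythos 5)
Your inductive step is sound: peeling off the projective top term via the brutal truncation triangle, shifting by one, and replacing $(k,t)$ by $(k-1,t-1)$ preserves $t-k$ and keeps the hypothesis on $\Ext_A^{t-k}(X^{-k},-)$ applicable, while the outer terms involving $X^0$ and $Y^0$ vanish for $t>k>0$ because those modules are projective. This is essentially the same mechanism the paper uses, except that the paper contracts the whole upper part in one step via the triangle $\tau_{\geq -k+1}(\cpx{X})\to\cpx{X}\to X^{-k}[k]\to\tau_{\geq -k+1}(\cpx{X})[1]$, using that $\tau_{\geq -k+1}(\cpx{X})$ is a bounded complex of projectives concentrated in degrees disjoint from those of $V[t]$ whenever $t>k$.

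The genuine gap is the opening ``standard d\'evissage on $\cpx{C}$.'' Filtering $\cpx{C}\in\Dd[{[-k, 0]}]{A}$ by brutal truncations produces graded pieces $C^{-i}[i]$ for $0\leq i\leq k$, so the five-lemma comparison of the two long exact sequences would require the natural isomorphism $\Hom_{\Db{A}}(\cpx{X},D[s])\simeq\Hom_{\Db{A}}(\cpx{Y},D[s])$ on modules $D$ for every shift $s$ in the range $[t,t+k]$, not merely for $s=t$; and the hypothesis supplies only the single degree $t-k$. The paper closes exactly this gap by choosing a representative of $\cpx{C}$ with injective terms in degrees $-k,\dots,-1$: then $\tau_{\leq-1}(\cpx{C})$ is a bounded complex of injectives, derived $\Hom$ out of it agrees with homotopy $\Hom$, and $\Hom_{\Db{A}}(X^{-k},\tau_{\leq-1}(\cpx{C})[l])=0$ for $l\geq 1$ by a degree count, so only the degree-zero term $C^0$ contributes and no higher shift is ever needed. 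Alternatively, one could first show (via syzygies and the Auslander--Reiten formula, as the paper does elsewhere for its functorial characterizations) that $\Ext_A^{t-k}(X^{-k},-)\simeq\Ext_A^{t-k}(Y^{-k},-)$ already forces $\Ext_A^{s}(X^{-k},-)\simeq\Ext_A^{s}(Y^{-k},-)$ for all $s\geq t-k$, after which your d\'evissage on $\cpx{C}$ would go through. As written, neither observation is present, and the reduction from complexes to modules is unjustified.
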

\begin{proof}
Here we give an alternative proof which simplifies the proof of \cite[Corollary 4.5]{FLH}.
Assume that
$\Ext_A^{t-k}(X^{-k},-)\simeq\Ext_A^{t-k}(Y^{-k},-)$, we claim
$\Hom_{\Db{A}}(X^{-k},(-)[t-k])|_{\Dd[{[-k, 0]}]{A}}\simeq\Hom_{\Db{A
}}(Y^{-k},(-)[t-k])|_{\Dd[{[-k, 0]}]{A}}  
$. 
Suppose that $\cpx{Z}\in \Dd[{[-k, 0]}]{A}$ with $Z^i$ injective for $-k\leq i\leq -1$. Then we have a triangle 
$Z^0\to \cpx{Z}\to\tau_{\leq -1}(\cpx{Z})\to Z^0[1]$ with $\tau_{\leq -1}(\cpx{Z})\in\Kb{\imodcat{A}}$.
There are two exact sequences
$$
\begin{aligned}
\cdots\to\Hom_{\Db{A}}(X^{-k},\tau_{\leq -1}
(\cpx{Z})[l-1])\to \Hom_{\Db{A}}(X^{-k},Z^0[l])\to\Hom_{\Db{A}}(X^{-k},\cpx{Z}[l])\\\to\Hom_{\Db{A}}(X^{-k},\tau_{\leq -1}
(\cpx{Z})[l])\to\cdots\\
\cdots\to\Hom_{\Db{A}}(Y^{-k},\tau_{\leq -1}
(\cpx{Z})[l-1])\to \Hom_{\Db{A}}(Y^{-k},Z^0[l])\to\Hom_{\Db{A}}(Y^{-k},\cpx{Z}[l])\\\to\Hom_{\Db{A}}(Y^{-k},\tau_{\leq -1}
(\cpx{Z})[l])\to\cdots
\end{aligned}
$$ for any integer $l\geq 1$.
Note that $\tau_{\leq -1}(\cpx{Z})\in\Kb{\imodcat{A}}$, and $l\geq 1$,
$\Hom_{\Db{A}}(X^{-k},\tau_{\leq -1}
(\cpx{Z})[l-1])=\Hom_{\Kb{A}}(X^{-k},\tau_{\leq -1}
(\cpx{Z})[l-1])=0$, $\Hom_{\Db{A}}(X^{-k},\tau_{\leq -1}
(\cpx{Z})[l])=\Hom_{\Kb{A}}(X^{-k},\tau_{\leq -1}
(\cpx{Z})[l])=0$. We have $ \Hom_{\Db{A}}(X^{-k},Z^0[l])\simeq\Hom_{\Db{A}}(X^{-k},\cpx{Z}[l])
$. Similarly, we get $ \Hom_{\Db{A}}(Y^{-k},Z^0[l])\simeq\Hom_{\Db{A}}(Y^{-k},\cpx{Z}[l])
$. Hence $\Hom_{\Db{A}}(X^{-k},(-)[t-k])|_{\Dd[{[-k, 0]}]{A}}\simeq\Hom_{\Db{A
}}(Y^{-k},(-)[t-k])|_{\Dd[{[-k, 0]}]{A}}  
$. 

On the other hand, there exists a triangle 
$\tau_{\geq -k+1}(\cpx{Z})\to\cpx{Z}\to Z^{-k}[k]\to\tau_{\geq -k+1}(\cpx{Z})[1] $ with $\tau_{\geq -k+1}(\cpx{Z})\in\Kb{\pmodcat{A}}$.
It follows that there are two exact sequences
$$
\begin{aligned}
\cdots\to\Hom_{\Db{A}}(\tau_{\geq -k+1}(\cpx{X})[1],V[l])\to \Hom_{\Db{A}}(X^{-k}[k],V[l])\to\Hom_{\Db{A}}(\cpx{X},V[l])\\\to\Hom_{\Db{A}}(\tau_{\geq -k+1}(\cpx{X}),V[l])\to\cdots\\
\cdots\to\Hom_{\Db{A}}(\tau_{\geq -k+1}(\cpx{Y})[1],V[l])\to \Hom_{\Db{A}}(Y^{-k}[k],V[l])\to\Hom_{\Db{A}}(\cpx{Y},V[l])\\\to\Hom_{\Db{A}}(\tau_{\geq -k+1}(\cpx{Y}),V[l])\to\cdots
\end{aligned}
$$ for any integer $l\geq 1$.
Note that $\tau_{\geq -k+1}(\cpx{Z})\in\Kb{\pmodcat{A}}$, and $l\geq 1$,
$\Hom_{\Db{A}}(\tau_{\geq -k+1}(\cpx{X})[1],V[l])=\Hom_{\Kb{A}}(\tau_{\geq -k+1}(\cpx{X})[1],V[l])=0$, $\Hom_{\Db{A}}(\tau_{\geq -k+1}(\cpx{X}),V[l])=\Hom_{\Kb{A}}(\tau_{\geq -k+1}(\cpx{X}),V[l])=0$. We have $  \Hom_{\Db{A}}(X^{-k}[k],V[l])\simeq\Hom_{\Db{A}}(\cpx{X},V[l])$. Similarly, $  \Hom_{\Db{A}}(Y^{-k}[k],V[l])\simeq\Hom_{\Db{A}}(\cpx{Y},V[l])$. 
Then $\Hom_{\Db{A}}(\cpx{X},(-)[t])|_{\Dd[{[-k, 0]}]{A}}\simeq\Hom_{\Db{A}}(\cpx{Y},(-)[t])|_{\Dd[{[-k, 0]}]{A}} 
$.
\end{proof}

Let $T^{\bullet}$ be an $F$-tilting complex associated to $L: \DFb{\Modcat{A}}\lra \Db{\Modcat{B}}$ of
the form:
$$
\cdots\rightarrow0\rightarrow T^{-n}\rightarrow
T^{-n+1}\rightarrow\cdots \rightarrow T^{-1}\rightarrow
T^{0}\rightarrow 0 \rightarrow\cdots.
$$ 

\begin{Lem} Let $X$ be a $A$-module. Then we have the follow statements hold.

(1) $H^{i}(L(X))=0$ for $i>n$ or $i<0$, and in $\Db{B}$, $L(X)$ is
isomorphic to a complex $\cpx{\bar{T}_{X}}$ of the following form:
$$
\cdots\ra 0\ra \bar{T}_{X}^{0}\ra \bar{T}_{X}^{1}\ra \cdots
\ra \bar{T}_{X}^{n-1}\ra \bar{T}_{X}^{n}\ra 0\ra \cdots
$$
where $\bar{T}_{X}^{i}$ is an projective $B$-modules for each $1\leq i\leq n$.

(2)
$H^{i}(L(X))=0$ for $i>n$ or $i<0$, and in $\Db{B}$, $L(X)$ is
isomorphic to a complex $\cpx{\bar{J}_{X}}$ of the following form:
$$
\cdots\ra 0\ra \bar{J}_{X}^{0}\ra \bar{J}_{X}^{1}\ra \cdots
\ra \bar{J}_{X}^{n-1}\ra \bar{J}_{X}^{n}\ra 0\ra \cdots
$$
where $\bar{T}_{X}^{i}$ is an injective $B$-modules for each $0\leq i\leq n-1$.
\end{Lem}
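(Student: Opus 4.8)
The plan is to prove the two statements together, since they are formally dual, and to imitate the structure of Lemma~\ref{relative-til} applied to the $A$-module $X$ rather than to $B$. First I would fix the $F$-tilting complex $\cpx{T}$ of the stated form and observe that, under the relative derived equivalence $L$, the image $L(X)$ lies in $\Db{B}$, so it is isomorphic to a bounded complex $\cpx{\bar{T}_X}$ of finitely generated projective $B$-modules. The cohomological vanishing $H^i(L(X))=0$ for $i>n$ or $i<0$ comes from the adjunction-style identity
$$
H^i(L(X))\simeq \Hom_{\Db{B}}(B, L(X)[i])\simeq \Hom_{\DFb{A}}(\cpx{T}, X[i])\simeq \Hom_{\Kb{\calA}}(\cpx{T}, X[i]),
$$
where the last isomorphism uses that $\cpx{T}\in\Kb{\mathcal{P}(F)}$ and $\mathcal{P}(F)$ consists of $F$-projectives, so no roofs are needed. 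Since $\cpx{T}$ is concentrated in degrees $-n,\dots,0$, the complex $\Hom_{\Kb{\calA}}(\cpx{T}, X[i])$ computed degreewise vanishes unless $0\le i\le n$. This gives the cohomology bound in both (1) and (2).

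Next I would pass from the cohomology bound to the sharper shape of $\cpx{\bar{T}_X}$. Starting from an arbitrary bounded projective resolution
$$
\cdots\to 0\to \bar{T}_X^{-r}\to\cdots\to \bar{T}_X^{-1}\to \bar{T}_X^0\to \bar{T}_X^1\to\cdots\to \bar{T}_X^{s}\to 0\to\cdots,
$$
the vanishing of $H^i(L(X))$ for $i>n$ forces the truncated sequence $0\to\Ima(d^n)\to \bar{T}_X^{n+1}\to\cdots\to \bar{T}_X^s\to 0$ to be split exact, so $\Ima(d^n)$ is projective and $\cpx{\bar{T}_X}$ is isomorphic in $\Db{B}$ to the complex obtained by replacing the tail with $\Ker(d^n)$ in degree $n$, which is projective. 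For the lower end, the vanishing of $\Hom_{\Db{B}}(\cpx{\bar{T}_X}, B[i])$ for $i>0$ — which follows from $\Hom_{\Db{B}}(\cpx{\bar{T}_X},B[i])\simeq \Hom_{\DFb{A}}(X, \cpx{T}[i])\simeq \Hom_{\Kb{\calA}}(X,\cpx{T}[i])=0$ for $i>0$, since $\cpx{T}$ has top degree $0$ — forces the corresponding truncation of $\Hom_B(\cpx{\bar{T}_X},B)$ to be split exact in $\Kb{B^{\rm op}}$, whence $\Coker(d^{-1})$ and the remaining negative-degree terms can be absorbed, leaving a complex of projectives concentrated in degrees $0,\dots,n$. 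This is exactly statement (1).

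For statement (2) I would run the dual argument: $L(X)$ is also isomorphic in $\Db{B}$ to a bounded complex of injective $B$-modules (take an injective coresolution), and the same two cohomological inputs — $H^i(L(X))=0$ for $i<0$, and the $\Hom$-vanishing against $B$ on the other side — let me truncate the injective complex at both ends so that it is concentrated in degrees $0,\dots,n$ with $\bar{J}_X^i$ injective for $0\le i\le n-1$; the top term $\bar{J}_X^n$ absorbs the cokernel from the truncation and need not be injective, matching the asymmetry in the statement. Here I would invoke $F$ having enough injectives together with $\add G = \mathcal{P}(F)$ only to the extent needed to guarantee the relevant $\Hom$-adjunctions transfer correctly.

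The main obstacle I anticipate is bookkeeping with the truncation/split-exactness arguments at both ends simultaneously — in particular making sure that after replacing a tail by a kernel (or a head by a cokernel) the new term genuinely is projective (resp. the complex genuinely is isomorphic in $\Db{B}$ to the truncated one, not merely quasi-isomorphic up to a shift), and that the degree indexing is not off by one. The cleanest way to control this is to cite Lemma~\ref{relative-til} directly with $\bar{T}^\bullet$ there taken to be a representative of $L(X)$: the argument given there for $L(G)$ versus $B$ is identical once one checks that the two $\Hom$-vanishing conditions it uses are available for $X$ in place of $G$, which is precisely what the adjunction identities above supply.
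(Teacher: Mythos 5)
Your computation of the cohomology bound $H^{i}(L(X))=0$ for $i>n$ or $i<0$ via $\Hom_{\Db{B}}(B,L(X)[i])\simeq\Hom_{\DFb{A}}(\cpx{T},X[i])\simeq\Hom_{\Kb{A}}(\cpx{T},X[i])$ is correct, matches the paper, and is indeed the key input. The problem is the \emph{second} vanishing condition you invoke for the lower truncation in part (1). You claim
$$\Hom_{\Db{B}}(\cpx{\bar{T}_X},B[i])\simeq\Hom_{\DFb{A}}(X,\cpx{T}[i])\simeq\Hom_{\Kb{A}}(X,\cpx{T}[i])=0\quad\text{for }i>0,$$
but both the last isomorphism and the final vanishing fail for a general $A$-module $X$. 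First, $\Hom_{\DFb{A}}(X,-)$ cannot be computed in $\Kb{A}$ just because the target $\cpx{T}$ is a complex of $F$-projectives; one would have to replace $X$ by an $F$-projective resolution. Second, even the homotopy $\Hom$-group $\Hom_{\Kb{A}}(X,\cpx{T}[i])$ is $H^{-i}(\Hom_A(X,\cpx{T}))$, which need not vanish for $0<i\leq n$ (it only vanishes for $i>n$, since $\cpx{T}$ sits in degrees $-n,\dots,0$). The analogous vanishing in Lemma~\ref{relative-til} is genuine precisely because there one tests against $G\in\mathcal{P}(F)$, and $\Hom_{\DFb{A}}(G,\cpx{T}[i])\simeq\Hom_{\Kb{A}}(G,\cpx{T}[i])$ with $G$ concentrated in degree $0$ does vanish for $i>0$; this does not transfer to arbitrary $X$.

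This is not a cosmetic issue: if the extra vanishing were available, you would conclude that \emph{all} terms $\bar{T}_X^{0},\dots,\bar{T}_X^{n}$ are projective, which is strictly stronger than the Lemma's claim (only $1\leq i\leq n$). The Lemma is deliberately weaker than Lemma~\ref{relative-til} and requires only the cohomology bound. The correct argument is: represent $L(X)$ by a bounded complex of projectives; the vanishing $H^i=0$ for $i>n$ splits the upper tail and makes $\Ker(d^n)$ projective (use it as the new top term); the vanishing $H^i=0$ for $i<0$ lets you replace the lower tail by $\Coker(d^{-1})$ sitting in degree $0$, but this cokernel is not in general projective — and it does not need to be. Part (2) is the dual: represent $L(X)$ by a bounded complex of injectives, so the lower truncation produces an \emph{injective} degree-$0$ term and the upper truncation produces a possibly-non-injective degree-$n$ term, matching the asymmetry $0\leq i\leq n-1$. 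Your last paragraph hints at this asymmetry for (2), but your proof of (1) as written overshoots and rests on a false $\Hom$-vanishing. Drop the appeal to $\Hom_{\Db{B}}(\cpx{\bar{T}_X},B[i])=0$ entirely; the cohomology bound alone suffices for both parts.
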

 
\begin{proof} 
(1) It is proved in \cite[Lemma 6.4]{P1}.

(2) Suppose that $L(X)$ is
isomorphic to a complex $\cpx{\bar{J}_{X}}$ in $\Db{B}$.
Considering the homological group of $\cpx{\bar{J}_{X}}$, we
have
$$
H^{i}(L(X))\simeq
\Hom_{\Kb{B}}(B,L(X)[i])\simeq
\Hom_{\Db{B}}(B,L(X)[i])\simeq
\Hom_{\DFb{A}}(T^{\bullet},X[i]).
$$
Then we have
$\Hom_{\DFb{A}}(T^{\bullet},X[i])\simeq
\Hom_{\Kb{A}}(T^{\bullet},X[i])$. Then
$H^{i}(L(X))=0$ for $i>n$ or $i<0$ by transferring
shifts. 
If $H^{i}(\cpx{\bar{J}_{X}})=0$ for
$i<0$, then the following sequence 
$$0\to\cdots\to\bar{J}_{X}^{-1}\to\bar{J}_{X}^{-1}\to \Ima(d^{-1})\to 0 \to\cdots$$ is a split
exact sequence, and 
$$0\to \Coker(d_J^{-1})\to
\bar{J}^{1}\to \cdots \to \bar{J}^{s}\to0 \to\cdots$$ is an
exact sequence, where $Coker(d_J^{-1})$ is injective.
If $H^{i}(\cpx{\bar{T_J}})=0$ for
$i>n$, then the following sequence $$0\to \Ima(d_J^{n})\to
\bar{J_x}^{n+1}\to \cdots \to \bar{J_X}^{s}\to0 \to\cdots$$ is an
exact sequence. Consequently, the complex $\cpx{\bar{J}_{X}}$ is isomorphic
in $\Db{B}$ to a complex of the form
$$
0\ra \bar{J}_{X}^{0}\ra \bar{J}_{X}^{1}\ra \cdots
\ra \bar{J}_{X}^{n-1}\ra \bar{J}_{X}^{n}\ra 0 \quad\text{in}\quad
\Db{B},
$$
where $\bar{J}_{X}^{i}$ is an injective $B$-modules for each $0\leq i\leq n-1$.
This completes the proof. 
\end{proof}

\begin{Lem}\label{res-L}
 Let $H:\Db{B}\ra \DFb{A}$ be a quasi-inverse of $L:\DFb{A}\ra \Db{B}$ and $Y\in\modcat{B}$. Then 
\begin{enumerate}
\item[(1)]
$H(Y)$ is isomorphic to a complex $\cpx{T_{Y}}$ of the following form
$$
\cdots\ra 0\ra  T_{Y}^{-n-2}\ra T_{Y}^{-n-1}\ra \cdots \ra
T_{Y}^{-1}\ra  T_{Y}^0\ra 0\ra \cdots,
$$
where $T_{Y}^{i}$ is an $F$-projective $A$-module for each $-n-1\leq i\leq 0$.
\item [(2)]
$H(Y)$ is isomorphic to a complex $\cpx{I_{Y}}$ of the following form
$$
\cdots\ra 0\ra  I_{Y}^{-n-2}\ra I_{Y}^{-n-1}\ra \cdots \ra
I_{Y}^{-1}\ra  I_{Y}^0\ra 0\ra \cdots,
$$
where $I_{Y}^{i}$ is an $F$-injective $A$-module for each $-n-2\leq i\leq -1$.
\end{enumerate}
\end{Lem}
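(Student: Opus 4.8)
The plan is to handle both statements in the manner of the preceding lemma and of the converse part of Lemma~\ref{relative-til}: first pin down the degrees in which the cohomology of $H(Y)$ can be nonzero, and then realize $H(Y)$ in the asserted form by taking an $F$-projective resolution (for part (1)) and an $F$-injective coresolution (for part (2)) and applying the relative truncation. The cohomological input is the same for both parts. Since $A$ is projective it is $F$-projective, and an $F$-projective stalk together with all its shifts is left orthogonal in $\K{A}$ to every $F$-acyclic complex; hence $\Hom_{\DFb{A}}(A,-)$ is computed in $\K{A}$ and $\Hom_{\DFb{A}}(A,H(Y)[i])\cong H^i(H(Y))$. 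As $L$ is a triangle equivalence with quasi-inverse $H$,
$$H^i(H(Y))\;\cong\;\Hom_{\DFb{A}}(A,H(Y)[i])\;\cong\;\Hom_{\Db{B}}(L(A),Y[i]).$$
Because $A\in\pmodcat{A}\subseteq\mathcal{P}(F)=\add G$ we have $L(A)\in\add L(G)=\add\cpx{\bar T}$ with $\cpx{\bar T}=L(G)$, and since $H(B)\simeq\cpx{T}$ has the form $0\to T^{-n}\to\cdots\to T^0\to0$, Lemma~\ref{relative-til} shows $\cpx{\bar T}$, hence also $L(A)$, is isomorphic in $\Db{B}$ to a complex of projective $B$-modules concentrated in degrees $[0,n]$. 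Therefore $\Hom_{\Db{B}}(L(A),Y[i])\cong\Hom_{\Kb{B}}(L(A),Y[i])$, which vanishes unless $-n\le i\le 0$ because $Y$ is a stalk complex. Thus $H^i(H(Y))=0$ for $i>0$ and for $i<-n$.

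For part (1): since $F$ has enough projectives, $\mathcal{P}(F)$ is contravariantly finite and $F=F_{\mathcal{P}(F)}$ by Lemma~\ref{2.1}, so, as in the absolute case $\Db{\modcat{A}}\simeq\Kbz{\pmodcat{A}}$, the object $H(Y)$ is represented by an $F$-quasi-isomorphism $\cpx{Q}\to H(Y)$ with $\cpx{Q}$ a bounded-above complex whose terms lie in $\add G=\mathcal{P}(F)$ and with $Q^i=0$ for $i>0$. By the cohomological bound $H^i(\cpx{Q})=0$ for $i<-n$; in particular $\Ima(d_Q^{-n-2})=\Ker(d_Q^{-n-1})$ and the induced sequences $0\to\Ima(d_Q^{i-1})\to Q^i\to\Ima(d_Q^i)\to 0$ are $F$-exact for $i\le -n-2$. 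Hence, by the relative truncation, $H(Y)$ is isomorphic in $\DFb{A}$ to
$$0\to\Ker(d_Q^{-n-1})\to Q^{-n-1}\to Q^{-n}\to\cdots\to Q^0\to0$$
with $\Ker(d_Q^{-n-1})$ placed in degree $-n-2$; this is of the asserted shape, with $T_Y^{-n-2}=\Ker(d_Q^{-n-1})$ a relative syzygy (not necessarily $F$-projective) and $T_Y^i=Q^i\in\mathcal{P}(F)$ for $-n-1\le i\le0$.

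For part (2): let $\cpx{T_Y}$ be the complex obtained in part (1), concentrated in degrees $[-n-2,0]$. Since $F$ has enough injectives, $\mathcal{I}(F)$ is covariantly finite and $F=F^{\mathcal{I}(F)}$ by Lemma~\ref{2.1}, so dually $\cpx{T_Y}$ is represented by an $F$-quasi-isomorphism $\cpx{T_Y}\to\cpx{J}$ with $\cpx{J}$ a complex whose terms lie in $\mathcal{I}(F)$ and with $J^i=0$ for $i<-n-2$ (the relative analogue of $\Db{\modcat{A}}\simeq\Kbf{\imodcat{A}}$, applied to a complex supported in degrees $\ge -n-2$). Since $H^i(\cpx{J})=H^i(H(Y))=0$ for $i>0$, the dual relative truncation shows $H(Y)$ is isomorphic in $\DFb{A}$ to
$$0\to J^{-n-2}\to\cdots\to J^{-1}\to\Ker(d_J^0)\to0,$$
which is of the asserted shape, with $I_Y^0=\Ker(d_J^0)$ a relative cosyzygy (not necessarily $F$-injective) and $I_Y^i=J^i\in\mathcal{I}(F)$ for $-n-2\le i\le-1$.

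The main obstacle is the relative replacement machinery used in parts (1) and (2): the relative versions of $\Db{\modcat{A}}\simeq\Kbz{\pmodcat{A}}\simeq\Kbf{\imodcat{A}}$, namely that every bounded object of $\DFb{A}$ admits a bounded-above $\mathcal{P}(F)$-resolution and a bounded-below $\mathcal{I}(F)$-coresolution joined to it by an $F$-quasi-isomorphism, together with the compatible truncation statements saying that these can be cut off in degrees governed by the cohomological support. Under the standing hypotheses that $F$ has enough projectives and injectives and $\add G=\mathcal{P}(F)$ this is available — it also supplies the orthogonality used above to compute $H^i(H(Y))$ — but it is the step where the real work lies; granting it, the truncations in parts (1) and (2) are routine and symmetric.
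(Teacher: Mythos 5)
Your outline is in the right spirit—compute the support of $H(Y)$ via $L(A)$, then resolve by $F$-projectives (resp.\ $F$-injectives) and truncate—but there is a genuine gap at the truncation step, and it is precisely the step the paper does not leave to the reader. To cut a bounded-below complex $\cpx{J}$ of $F$-injectives at degree $0$ (or a bounded-above complex $\cpx{Q}$ of $F$-projectives at degree $-n-2$) and remain in the same isomorphism class in $\DFb{A}$, the discarded tail must be $F$-acyclic. Since $F$ has enough projectives, this means: for \emph{every} $P\in\mathcal{P}(F)$, the complex $\Hom_A(P,\cpx{J})$ must be exact beyond the cut. You justify the truncation only by $H^i(H(Y))=0$, which is $\Hom_{\DFb{A}}(A,H(Y)[i])=0$, i.e.\ the special case $P=A$. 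In the relative setting this is strictly weaker than what is needed, because $\add A\subsetneq\mathcal{P}(F)$ in general: ordinary acyclicity of the tail does not give $F$-acyclicity. Concretely, a surjection $Q^{-1}\twoheadrightarrow Q^0$ onto an $F$-projective $Q^0$ is an $F$-epimorphism if and only if it splits, so a bounded-above acyclic complex of $F$-projectives need not be $F$-acyclic, unlike the absolute case your phrasing (``cut off in degrees governed by the cohomological support'') is implicitly importing.

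The paper supplies exactly the missing ingredient: for every $P\in\mathcal{P}(F)$,
$$
\Hom_{\DFb{A}}(P,H(Y)[i])\;\cong\;\Hom_{\Db{B}}(\bar{T}_P^\bullet,Y[i])\;\cong\;H^i(\Hom_B(\bar{T}_P^\bullet,Y))=0\qquad (i>0 \ \text{or}\ i<-n),
$$
since $L(P)\simeq\bar{T}_P^\bullet\in\add\cpx{\bar T}$ is concentrated in degrees $[0,n]$ in $\Kb{\pmodcat{B}}$. This vanishing is what makes the truncated tails $F$-acyclic (via $F=F_{\mathcal{P}(F)}$), and hence the truncations $F$-quasi-isomorphisms. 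Your computation for $P=A$ extends verbatim to arbitrary $P\in\mathcal{P}(F)$, so the fix is small; but without it the argument does not close. As a secondary remark, your route to (2)---first put $H(Y)$ in degrees $[-n-2,0]$ via (1), then take an $F$-injective coresolution starting at $-n-2$ and cut once at degree $0$---differs mildly from the paper's, which works with a complex of $F$-injectives directly and truncates at both ends; once the $\Hom(P,-)$-vanishing is in place, both are fine, and yours has the small advantage of a single truncation.
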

\begin{proof}
(1) The proof is given in \cite[Lemma 6.3]{P1}.

(2) By Lemma \ref{relative-til},  the complex $\cpx{\bar{T}}\in\Kb{\pmodcat{B}}$ associated to $H$ of the form
$0\lra \bar{T}^0\lra \bar{T}^1\lra\cdots\lra\bar{T}^n\lra 0$.
First, we have the following isomorphisms
$$
H^{i}(H(Y))\simeq \Hom_{\DFb{A}}(A,H(Y)[i])\simeq
\Hom_{\Db{B}}(\bar{T}_{A}^{\bullet},Y[i]) \simeq
\Hom_{\K{B}}(\bar{T}_{A}^{\bullet},Y[i]),
$$
where $L(A)\simeq \bar{T}_{A}^{\bullet}$ has the same
form as $\bar{T}^{\bullet}$. Then $H^{i}(H(Y))=0$ for $i>0$ or
$i<-n$. Assume that $H(Y)$ is isomorphic to a bounded above
$F$-injective complex:
$$
\cdots\ra T_{Y}^{-n-1}\ra T_{Y}^{-n}\ra \cdots\ra T_{Y}^{-1}\ra
T_{Y}^{0}\ra\cdots\ra T_{Y}^{s}\ra0
$$
Then we get two acyclic complexes
$$
\cdots\ra T_{Y}^{-n-1}\ra \Ima(d_{T_{Y}}^{-n-1})\ra 0\ra \cdots
$$
and
$$
0\ra \Ima(d_{T_{Y}}^{0})\ra T_{Y}^{1}\ra \cdots.
$$
For any $P\in \mathcal {P}(F)$, we have
$$
H^{i}(P,H(Y))\simeq \Hom_{\DFb{A}}(P,H(Y)[i])\simeq
\Hom_{\Db{B}}(\bar{T}_{P}{^{\bullet}},Y[i]) \simeq
H^{i}(\Hom_{B}(\bar{T}_{P}{^{\bullet}},Y)) =0
$$
for $i>0$ or $i<-n$, where $L(P)\simeq \bar{T}_{P}{^{\bullet}}$ has
the same form as $\bar{T}^{\bullet}$. Therefore, we get the
following acyclic complexes:
$$
\cdots\ra \Hom(P,T_{Y}^{-n-1})\ra \Ima\Hom(P,d_{T_{Y}}^{-n-1})\ra 0
$$
and
$$
0\ra \Ima(\Hom(P,d_{T_{Y}}^{0}))\ra \Hom(P,T_{Y}^{1})\ra \cdots.
$$
We conclude that there exist two $F$-acyclic complexes:
$$
\cdots\ra T_{Y}^{-n-2}\ra \Ima(d_{T_{Y}}^{-n-2})\ra 0\ra \cdots,
$$
and
$$
0\ra \Ima(d_{T_{Y}}^{0})\ra T_{Y}^{1}\ra \cdots.
$$
Then the complex $H(Y)$ is isomorphic in $\DFb{A}$ to a
complex $\cpx{T_{Y}}$ of the following form:
$$
\cdots\ra 0\ra \Ima(d_{T_{Y}}^{-n-2})\ra T_{Y}^{-n-1}\ra \cdots \ra
T_{Y}^{-1}\ra \Ker(d_{T_{Y}}^{0})\ra 0\ra \cdots,
$$
where $\Ima(d_{T_{Y}}^{-n-2})$ and $T_{Y}^{-i}$($i=1,\cdots, n-1$) are
$F$-injective $A$-modules. 
\end{proof}

\begin{Lem}\label{res-H}
 Let $L:\DFb{A}\ra \Db{B}$ be a
relative derived equivalence such that $B=\End(\cpx{T})$. Let $X,Y\in\modcat{A}$, and $H: \Db{B}\to\DFb{A}$ its quasi-inverse. If $\Ext^t_B(M,-)\simeq\Ext^t_B(N,-)$ for all $t\geq l>0$, then 
$$
\Hom_{\Db{B}}(M,L(-[t]))|_{\modcat{A}}\simeq\Hom_{\Db{B}}(N,L(-[t]))|_{\modcat{A}} \quad\text{for}\quad t\geq n+l.
$$   
\end{Lem}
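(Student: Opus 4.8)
The plan is to reduce the comparison to the $(l-1)$-st syzygies of $M$ and $N$ over $B$ — which the hypothesis forces to coincide up to projective direct summands — and then to exploit that $L(X)$ is cohomologically concentrated in degrees $[0,n]$ for every $X\in\modcat{A}$. (Here $M,N$ are $B$-modules, the variable ranging over $\modcat{A}$ being the common argument of the two functors to be compared; I write $\Omega_B$ for the ordinary syzygy over $B$. The quasi-inverse $H$ is not needed for this route, although one can rephrase the reduction inside $\DFb{A}$ via Lemma~\ref{res-L}.)

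First I would settle the algebraic input. Dimension shifting turns the hypothesis, taken at $t=l$, into $\Ext^{1}_{B}(\Omega_B^{l-1}M,-)\simeq\Ext^{1}_{B}(\Omega_B^{l-1}N,-)$. Applying Proposition~\ref{11.1} to the algebra $B$ equipped with the full bifunctor $\Ext^{1}_{B}(-,-)$ — for which the relative projectives are the ordinary projectives and $\add B=\pmodcat{B}$, so the standing hypotheses of the paper are met — yields an isomorphism
$$
\Omega_B^{l-1}M\oplus P\;\simeq\;\Omega_B^{l-1}N\oplus Q\qquad\text{in }\modcat{B}
$$
for suitable $P,Q\in\pmodcat{B}$.

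Next I would collect the two facts about $L(X)$, $X\in\modcat{A}$, that drive the reduction. Since $L(X)$ is isomorphic in $\Db{B}$ to a complex concentrated in degrees $[0,n]$ (part~(1) of the lemma on $L(X)$ stated above; cf.\ \cite[Lemma~6.4]{P1}), we have $H^{i}(L(X))=0$ for $i\notin[0,n]$; hence $\Hom_{\Db{B}}(P',L(X)[s])\simeq\Hom_{B}(P',H^{s}(L(X)))=0$ for any projective $B$-module $P'$ as soon as $s>n$, and $\Hom_{\Db{B}}(\cpx{P},L(X)[s])=0$ for any bounded complex $\cpx{P}$ of projective $B$-modules concentrated in degrees $[-(l-2),0]$ as soon as $s\geq n+l-1$ (such a $\cpx{P}$ is $K$-projective, so the Hom is computed in $\Kb{B}$, where the nonzero terms of $\cpx{P}$ and of a suitable model of $L(X)[s]$ sit in disjoint degrees). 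Now truncating a projective resolution of $M$ at level $-(l-1)$ gives a distinguished triangle $\cpx{P_{M}}\to M\to(\Omega_B^{l-1}M)[l-1]\to\cpx{P_{M}}[1]$ in $\Db{B}$ with $\cpx{P_{M}}$ a bounded complex of projective $B$-modules in degrees $[-(l-2),0]$ (for $l=1$ this triangle is trivial). Applying $\Hom_{\Db{B}}(-,L(X)[t])$ and killing the $\cpx{P_{M}}$-terms by the vanishings above, I obtain, naturally in $X$, an isomorphism
$$
\Hom_{\Db{B}}(M,L(X)[t])\;\simeq\;\Hom_{\Db{B}}(\Omega_B^{l-1}M,L(X)[t-l+1])\qquad(t\geq n+l),
$$
and the same for $N$. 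Because $t\geq n+l$ forces $t-l+1>n$, the summands $P,Q$ are annihilated by $\Hom_{\Db{B}}(-,L(X)[t-l+1])$, so the module isomorphism of the second paragraph identifies $\Hom_{\Db{B}}(\Omega_B^{l-1}M,L(X)[t-l+1])$ with $\Hom_{\Db{B}}(\Omega_B^{l-1}N,L(X)[t-l+1])$; concatenating gives $\Hom_{\Db{B}}(M,L(-[t]))|_{\modcat{A}}\simeq\Hom_{\Db{B}}(N,L(-[t]))|_{\modcat{A}}$ for $t\geq n+l$.

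The point that needs care is naturality in $X$ in that last identification: the functors $\Hom_{\Db{B}}(\Omega_B^{l-1}M\oplus P,-)$ and $\Hom_{\Db{B}}(\Omega_B^{l-1}M,-)$ are \emph{not} naturally isomorphic on all of $\Db{B}$ — only on the full subcategory of objects $\cpx{C}$ with $\Hom_{\Db{B}}(P,\cpx{C})=0$. So I would have to verify that every object occurring in the zig-zag, namely each $L(X)[t-l+1]$ with $X\in\modcat{A}$ and $t\geq n+l$, lies in that subcategory; this is precisely what the bound $t\geq n+l$ secures, via $H^{>n}(L(X))=0$. All the remaining isomorphisms come from applying a cohomological functor to a fixed distinguished triangle, hence are automatically natural in $X$, so this bookkeeping is the only genuine subtlety.
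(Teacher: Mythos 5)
Your argument is correct, but it follows a genuinely different route from the paper. The paper works on the target side of $L$: by part~(2) of the preceding lemma (the analogue of Lemma~\ref{res-L} for $L$ rather than $H$), $L(V)$ is replaced by a complex $\cpx{J}_V$ concentrated in degrees $[0,n]$ whose terms in degrees $0,\dots,n-1$ are \emph{injective}; the triangle $\tau_{\leq n-1}(\cpx{J}_V)[t-1]\to J_V^n[-n+t]\to\cpx{J}_V[t]\to\tau_{\leq n-1}(\cpx{J}_V)[t]$ and the vanishing of $\Hom_{\Db{B}}(M,-)$ on shifts of the bounded injective complex $\tau_{\leq n-1}(\cpx{J}_V)$ then give $\Hom_{\Db{B}}(M,\cpx{J}_V[t])\simeq\Hom_{\Db{B}}(M,J_V^n[-n+t])=\Ext_B^{\,t-n}(M,J_V^n)$, and the hypothesis $\Ext_B^t(M,-)\simeq\Ext_B^t(N,-)$ is invoked directly at the single module $J_V^n$ since $t-n\geq l$. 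You instead work on the source side: you shift the hypothesis down to $\Ext_B^1$ via $\Omega_B^{l-1}$, pass through Proposition~\ref{11.1} (i.e.\ the absolute Auslander--Reiten argument) to get an honest module isomorphism $\Omega_B^{l-1}M\oplus P\simeq\Omega_B^{l-1}N\oplus Q$ with $P,Q$ projective, and then use only the cohomological concentration $H^i(L(V))=0$ for $i\notin[0,n]$ (part~(1) of that lemma) together with the stupid-truncation triangle of a projective resolution of $M$. Both routes produce the same bound $t\geq n+l$. The trade-off is visible: the paper's argument avoids any appeal to the Auslander--Reiten formula or the relative stable category but needs the injective model of $L(V)$; yours uses only the weaker cohomological input on $L(V)$ but imports Proposition~\ref{11.1} as a black box. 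Your remark about naturality is the right thing to flag, and your resolution (the offending summands $P,Q$ are annihilated because $t-l+1>n$) is correct; the paper's version has the analogous, unstated, naturality check in $V\mapsto J_V^n$.
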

\begin{proof}
The proof is similar to that of \cite[Lemma 4.9]{FLH}. 
Let $V\in\modcat{A}$. By Lemma 
\ref{res-H}(2), in $\Db{B}$, $L(V)$ is
isomorphic to a complex $\cpx{J}_{V}$ of the following form:
$$
\cdots\ra 0\ra J_{V}^{0}\ra J_{V}^{1}\ra \cdots
\ra J_{V}^{n-1}\ra J_{v}^{n}\ra 0\ra \cdots
$$
where $J_{V}^{i}$ is an injective $B$-modules for each $0\leq i\leq n-1$. Then there exists a triangle 
$\tau_{\leq n-1}(\cpx{J}_{V})[t-1]\to J_{V}^{n}[-n+t]\to \cpx{J}_{V}[t]\to \tau_{\leq n-1}(\cpx{J}_{V})[t]$.
Applying the funcors $\Hom_{\Db{B}}(M,-)$ and $\Hom_{\Db{B}}(N,-)$ to the above triangle, we get two exact sequences
$$
\begin{aligned}
\cdots\to\Hom_{\Db{A}}(M,\tau_{\leq n-1}(\cpx{J}_{V})[t-1])\to \Hom_{\Db{A}}(M,J_{V}^{n}[-n+t])\to\Hom_{\Db{A}}(M,\cpx{J}_{V}[t])\\\to\Hom_{\Db{A}}(\tau_{\geq -k+1}(\cpx{J}),V[t])\to\cdots\\
\cdots\to\Hom_{\Db{A}}(N,\tau_{\leq n-1}(\cpx{J}_{V})[t-1])\to \Hom_{\Db{A}}(N,J_{V}^{n}[-n+t])\to\Hom_{\Db{A}}(N,\cpx{J}_{V}[t])\\\to\Hom_{\Db{A}}(N,\tau_{\geq -k+1}(\cpx{J}),V[t])\to\cdots\\
\end{aligned}
$$ for any integer $t\geq n+l$.

Note that $\tau_{\leq n-1}(\cpx{J}_{V})\in\Kb{\imodcat{A}}$, and $t\geq n+l\geq 1$,
$\Hom_{\Db{A}}(M,\tau_{\leq n-1}(\cpx{J}_{V})[t-1])=\Hom_{\Kb{A}}(M,\tau_{\leq n-1}(\cpx{J}_{V})[t-1])=0$, $\Hom_{\Db{A}}(M,\tau_{\leq n-1}(\cpx{J}_{V})[t])=\Hom_{\Kb{A}}(M,\tau_{\leq n-1}(\cpx{J}_{V})[t])=0$. We have $\Hom_{\Db{A}}(M,J_{V}^{n}[-n+t])\simeq\Hom_{\Db{A}}(M,\cpx{J}_{V}[t])$. Similarly, $\Hom_{\Db{A}}(N,J_{V}^{n}[-n+t])\simeq\Hom_{\Db{A}}(N,\cpx{J}_{V}[t])$. Hence, $\Hom_{\Db{B}}(M,L(-[t]))|_{\modcat{A}}\simeq\Hom_{\Db{B}}(N,L(-[t]))|_{\modcat{A}} \quad\text{for}\quad t\geq n+l.$

\end{proof}

\begin{Theo}\label{Main-result} Let $L:\DFb{A}\ra \Db{B}$ be a
relative derived equivalence. Suppose $T^{\bullet}$ is the relative
tilting complex associated to $L$. Then we have
$\phd_{F}(A)-t(T^{\bullet})\leq \phd(B)\leq
\phd_{F}(A)+t(T^{\bullet})+2$.

\end{Theo}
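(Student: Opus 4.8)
The plan is to prove the two inequalities separately, using the two resolutions of $L(X)$ and $H(Y)$ established in the preceding lemmas together with the characterization of $\phd$ via divisions (Theorem \ref{ext-ph}) and the comparison lemmas \ref{12.2}, \ref{12.3}, \ref{7.6}, \ref{res-H}. Throughout write $n=t(\cpx{T})$, let $H:\Db{B}\to\DFb{A}$ be the quasi-inverse of $L$, and fix the forms of $\cpx{T}$, $\cpx{\bar T_X}$, $\cpx{J_X}$, $\cpx{T_Y}$, $\cpx{I_Y}$ from Lemma \ref{relative-til}, the unnamed lemma giving $L(X)\simeq\cpx{\bar T_X}$ (projective, degrees $[0,n]$) and $L(X)\simeq\cpx{J_X}$ (injective, degrees $[0,n]$), and Lemma \ref{res-L} giving $H(Y)\simeq\cpx{T_Y}$ ($F$-projective, degrees $[-n-2,0]$) and $H(Y)\simeq\cpx{I_Y}$ ($F$-injective).

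For the \textbf{lower bound} $\phd_F(A)-n\le\phd(B)$: set $d=\phd_F(A)$ and, by Theorem \ref{ext-ph}, choose a relative $d$-Division $(X_1,X_2)$ of some $A$-module $M$, i.e. $X_1,X_2\in\add M$ with $\add X_1\cap\add X_2=0$, $\Ext_F^d(X_1,-)\ncong\Ext_F^d(X_2,-)$ but $\Ext_F^{d+1}(X_1,-)\simeq\Ext_F^{d+1}(X_2,-)$. Apply $L$: each $L(X_i)$ is isomorphic in $\Db{B}$ to a complex $\cpx{\bar T_{X_i}}$ concentrated in degrees $[0,n]$ with projective terms. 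Using Lemma \ref{12.2} (with $k=n$), the isomorphism $\Ext_F^{d+1}(X_1,-)\simeq\Ext_F^{d+1}(X_2,-)$ — transported through $L$ and $H$ via Lemma \ref{res-H} and the identification $\Hom_{\DFb{A}}(X_i,-[t])\simeq\Hom_{\Db{B}}(L(X_i),L(-)[t])$ — forces $\Ext_B^{d+1+n}\big((\bar T_{X_1})^{\,0\text{-}}?\big)$; more precisely it yields $\Hom_{\Db{B}}(\cpx{\bar T_{X_1}},-[t])|_{\modcat B}\simeq\Hom_{\Db{B}}(\cpx{\bar T_{X_2}},-[t])|_{\modcat B}$ for $t\ge d+1+n$, hence by Lemma \ref{12.2} $\Ext_B^{t}((\bar T_{X_1})^{\min},-)\simeq\Ext_B^{t}((\bar T_{X_2})^{\min},-)$ for $t\ge d+1+n$, while the non-isomorphism at degree $d$ similarly survives at degree $d+n$ (shifting by at most $n$ cohomological degrees through the term length). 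This exhibits a relative (ordinary, since $B$ side) $(d-n)$-Division of a suitable $B$-module built from the lowest-degree terms of $\cpx{\bar T_{X_1}}\oplus\cpx{\bar T_{X_2}}$, so $\phd(B)\ge d-n$ by Theorem \ref{ext-ph}.

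For the \textbf{upper bound} $\phd(B)\le\phd_F(A)+n+2$: run the symmetric argument through $H$. Given a $d'$-Division $(Y_1,Y_2)$ of a $B$-module witnessing $\phd(B)=d'$, apply $H$; each $H(Y_i)$ is isomorphic to an $F$-projective complex $\cpx{T_{Y_i}}$ in degrees $[-n-2,0]$. Transporting $\Ext_B^{d'+1}(Y_1,-)\simeq\Ext_B^{d'+1}(Y_2,-)$ through $H$ and using Lemma \ref{12.3} / Lemma \ref{7.6} (now with the relative functors and $k=n+2$, $m=-n-2$, $t=0$) produces $\Ext_F^{t}((T_{Y_1})^{-n-2},-)\simeq\Ext_F^{t}((T_{Y_2})^{-n-2},-)$ for $t\ge d'+1-(n+2)$ together with a genuine non-isomorphism one degree lower, i.e. a relative $(d'+n+2)$-Division of $(T_{Y_1})^{-n-2}\oplus(T_{Y_2})^{-n-2}$ (after discarding $F$-projective summands, which does not affect $\Ext_F$ in positive degrees). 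Hence $\phd_F(A)\ge d'-n-2$, which rearranges to the claim. The bookkeeping that the "$+2$'' is exactly the extra length ($n+2$ versus $n$) on the $H$-side, coming from Lemma \ref{res-L}'s degree range $[-n-2,0]$ rather than $[0,n]$, is what makes the two bounds asymmetric.

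The main obstacle I expect is the careful degree-tracking in translating a $d$-Division on one side into a division on the other: one must check that applying $L$ (resp. $H$) shifts the relevant $\Ext$-degree by precisely the term length and no more, that the lowest-degree term of the image complex is the right object to test, and that $\add$-disjointness of $(X_1,X_2)$ is inherited by the lowest-degree terms after deleting projective/$F$-projective summands. Making the equivalences $\Hom_{\DFb{A}}(X,-[t])\cong\Hom_{\Db{B}}(L X, (L-)[t])$ interact correctly with the restrictions $|_{\modcat A}$ and $|_{\modcat B}$ — so that Lemmas \ref{12.2}, \ref{12.3}, \ref{res-H} chain together without a degree leak — is the delicate point; once that is set up, Theorem \ref{ext-ph} converts everything back into $\phd$ mechanically.
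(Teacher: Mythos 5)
Your proposal follows the same overall strategy as the paper: take a $d$-Division, transport it through $L$ or $H$ to a division on the other side using the shape of the image complexes (projective terms in degrees $[0,n]$ for $L(X)$ and $F$-projective terms in degrees $[-n-2,0]$ for $H(Y)$ from Lemma~\ref{res-L}), then recover a $\phd$-bound via Lemmas~\ref{12.2}, \ref{12.3}, \ref{res-H} and the characterization in Theorem~\ref{ext-ph}. The paper carries out only the upper bound explicitly (through $H$, which is where the $[-n-2,0]$ range produces the ``$+2$'') and asserts the lower bound ``analogously''; your explanation of the asymmetry in the two bounds is exactly the paper's.

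One warning on your lower-bound degree bookkeeping: you write that the iso is preserved for $t\geq d+1+n$ and that ``the non-isomorphism at degree $d$ survives at degree $d+n$''; if both were literally correct you would obtain a $(d+n)$-Division of the $B$-module, hence the false inequality $\phd(B)\geq d+n$. What actually happens is that, after shifting $L(X_i)$ into $\Cc[{[-n,0]}]{B}$ and applying Lemma~\ref{12.3} in contrapositive form with $k=n$, the non-iso degree drops by $n$ to give a $(d-n)$-Division, while the iso part only needs to hold above that threshold; this is what yields $\phd(B)\geq d-n$. Your final displayed conclusion is correct, and you already flag the bookkeeping as delicate, but the intermediate statement as written is off by a sign.
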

\begin{proof}
Assumet that $\phd_F(A)=r<\infty$. If $\phd(B)\leq n$, then $\phd(B)\leq \phd_{F}(A)+n$. Suppose that there exists $X\in\modcat{B}$ such that $\phd(X)=d>n$. It follows that $X=\bar{M}\oplus \bar{N}$, and there exist $M\in\add(\bar{M})$ and $N\in\add(\bar{N})$ such that 
\begin{equation}
\left\{
\begin{array}{cc}
\Ext^d_B(M,-)\ncong\Ext^d_B(N,-),\\ 
\Ext^t_B(M,-)\simeq\Ext^t_B(N,-), & \text{for}\ \ t\geq d+1.  
\end{array} \right. 
\end{equation}
By Lemma \ref{res-H}, the second equation 
\begin{equation}
\left\{
\begin{array}{cc}
\Hom_{\Db{B}}(M,-[d])|_{\modcat{B}}\ncong\Hom_{\Db{B}}(N,-[d])|_{\modcat{B}},\\ 
\Hom_{\Db{B}}(M,L(-[t]))|_{\modcat{A}}\simeq\Hom_{\Db{B}}(N,L(-[t]))|_{\modcat{A}} 
\end{array} \right.  
\end{equation}
for all $t\geq n+d+1$.
After applying the equivalence $H$, we get
\begin{equation}
\left\{
\begin{array}{cc}
\Hom_{\DFb{A}}(H(M),H(-[d]))\ncong\Hom_{\DFb{A}}(H(N),H(-[d])),\\ 
\Hom_{\DFb{A}}(H(M),(-[t]))|_{\modcat{A}}\simeq\Hom_{\DFb{A}}(H(N),(-[t]))|_{\modcat{A}}
\end{array} \right. 
\end{equation}
for all $t\geq n+d+1$.
By Lemma \ref{res-L}, $H(M)$ and $H(N)$ have the following forms
$$
\cdots\ra 0\ra  T_{M}^{-n-2}\ra T_{M}^{-n-1}\ra \cdots \ra
T_{M}^{-1}\ra  T_{M}^0\ra 0\ra \cdots,
$$
and
$$
\cdots\ra 0\ra  T_{N}^{-n-2}\ra T_{N}^{-n-1}\ra \cdots \ra
T_{N}^{-1}\ra  T_{N}^0\ra 0\ra \cdots,
$$
By Lemma \ref{12.2}, the second term of Eq. (3) gives us that
\begin{equation}
\Ext^t_F(T^{-n-2}_M,-)\simeq\Ext^t_F(T^{-n-2}_N,-), 
\end{equation}
for $t\geq n+d+1$. 
By Lemma \ref{12.3}, the first term of Eq. (3) is euivalent to
\begin{equation}
\Ext^{d-n-2}_F(T^{-n-2}_M,-)\ncong\Ext^{d-n-2}_F(T^{-n-2}_N,-). 
\end{equation}

In particular, $T^{-n-2}_M\ncong T^{-n-2}_N$. We can assume that $\add (T^{-n-2}_M)$ and
$\add (T^{-n-2}_N)$ have trivial intersection because, otherwise, we can decompose $T^{-n-2}_M\ncong T^{-n-2}_N$
as direct sum of indecomposables and withdraw from each one the common factors. The
modules obtained satisfy (4) and (5), and their additive closure has trivial intersection

Now let $V=T^{-n-2}_M\oplus T^{-n-2}_N $. Then $d-n-2\leq \phd_F(V)\leq d+n$ by Theorem \ref{ext-ph} and Eq.s (4) and (5).
$\phd_F(A)=r<\infty$, we have $d\leq n+2+r$ and hence $\phd(B)\leq n+\phd_F(A)+2.$ Analogously, 
 we can show that  $\phd_F(A)\leq\phd(B)+n.$ 
 This completes the proof.

\end{proof}

\begin{rem}
  If $F=\Ext^1(-,-)$, then we re-obtain the result \cite[Theorem 4.10]{FLH}.  
\end{rem}

\noindent{\bf Acknowledgements.} The second author would like to
acknowledge the Fundamental Research Funds for the Central Universities of Beijing Jiaotong University (2024JBMC001).

\bigskip
\end{document}